\newtheorem{thm}{Theorem}[section]
\newtheorem{lem}[thm]{Lemma}
\newtheorem{cor}[thm]{Corollary}
\theoremstyle{definition}
\numberwithin{equation}{section}
\newcommand{\ob}[1]{{\mathbb{#1}}}
\newcommand{\N}{\Bbb{ N}}
\newcommand{\Z}{\Bbb{ Z}}
\newcommand{\Q}{\Bbb{ Q}}
\newcommand{\setsuchthat}{\mid}
\newcommand{\ul}[1]{\underline{#1}}
\newcommand{\rem}{\operatorname{rem}}
\newcommand{\Tor}{\operatorname{Tor}}
\newcommand{\ord}{\operatorname{ord}}
\newcommand{\algop}[2]{( {#1}, {#2} )}
\newcommand{\R}{\mathbb{R}}
\DeclareMathAlphabet{\mathbfsl}{OT1}{cmr}{bx}{it}
\newcommand{\tupBold}[1]{\mathbfsl{#1}}
\newcommand{\vb}[1]{\tupBold{#1}}
\renewcommand{\emptyset}{\varnothing}
\newcommand{\ar}[1]{^{(#1)}}
\title{Zero testing and equation solving for sparse polynomials
       on rectangular domains}
\author{Erhard Aichinger}
\email[E. Aichinger, corresponding author]{erhard@algebra.uni-linz.ac.at}
\author{Simon Grünbacher}
\email[S. Gr\"unbacher]{simon.gruenbacher@gmail.com}
\author{Paul Hametner}
\email[P. Hametner]{hametnerpaul05@gmail.com}
\address{Institute for Algebra,
  Johannes Kepler University Linz,
  Altenbergerstra\ss e 69, 4040 Linz,
  Austria}
\subjclass[2020]{11T06, 11T41}
  \thanks{Supported by the Austrian Science Fund (FWF):P33878}
\keywords{}
\date{\today}
\begin{document}
\bibliographystyle{amsplain}
\begin{abstract}
  We consider sparse polynomials in $N$ variables over a finite field,
  and ask whether they vanish on a set $S^N$, where $S$ is
  a set of nonzero elements of the field.
  We see that if for a polynomial $f$, there is $\vb{c}\in S^N$
  with $f (\vb{c}) \neq 0$, then there is such
  a $\vb{c}$ in every sphere inside
  $S^N$, where the radius of the sphere is bounded by
  a multiple of the logarithm of the number
  of monomials that appear in $f$.
  A similar result holds  for the solutions of the
  equations  $f_1 = \cdots = f_r  = 0$ inside $S^N$.
\end{abstract}

\maketitle

\section{Introduction}
We are guided by the following problem:
Given a finite field $\ob{F}$ with $q$ elements, a polynomial
$f \in \ob{F} [X_1, \ldots, X_N]$ and $S \subseteq \ob{F} \setminus \{0\}$,
determine whether $f (\vb{s}) = 0$ for all  $\vb{s} \in S^N$.
Obviously, this question can be solved by testing $f$ on all $|S|^N$
points in $S^N$. Another method would be to determine
the remainder of $f$ modulo the vanishing ideal
$\langle \prod_{s \in S} (X_i - s) \mid i \in \ul{N} \rangle$ of
$S^N$. Then $f$ vanishes on $S^N$ if and only if this remainder
is $0$. However, in computing the remainder of $f$, we
may obtain intermediate polynomials that contain more monomials
than $f$. Therefore, this approach will only yield a fast algorithm
if $\prod_{s \in S} (X - s)$ contains at most two monomials, which
happens if and only if
$S$ is a coset of a subgroup of $(\ob{F} \setminus\{0\}, \cdot)$.
In this note, we show that if the polynomial is sufficiently sparse,
i.e., contains only few monomials, and has a nonzero
inside $S^N$, then we find such a nonzero
in the proximity of every point in $S^N$. This limits the
number of points at which we evaluate $f$ to a testing
set of size at most $(N \cdot |S|)^{\log_{t} (M)}$, where
$t = \frac{q-1}{q-2}$ and $M$ is the number of monomials in $f$.
As a consequence, we obtain a method to find solutions of sparse systems
of polynomial equations by using that over the finite field $\ob{F}_q$, the
solutions of $f_1 = \cdots = f_r = 0$
are the nonzeros of $\prod_{i=1}^r (1 - f_i^{q-1})$ (Section~\ref{sec:solve}).

The testing set strategy is particularly useful when
we can access the input polynomial $f$ only as a ``black box''
that produces the value $f(\vb{a})$ on input
$\vb{a} \in \ob{F}^N$, we have no access to the
coefficients of $f$, but for some reason, we know that $f$
has at most $M$ monomials. This is a viewpoint
also taken in \cite{CD:OZTA}. There it is
proved that $(N (q-1))^{\log_2 (M)}$ evaluations of $f$
suffice to determine whether
$f$ is $0$ on all of $\ob{F}_q^N$ \cite[p.157]{CD:OZTA} (cf. \cite[p.1061]{GK:FPAF}).

The main content of the present note
lies in the fact that polynomials
that behave similar to a logical conjunction -- we will call such
polynomials \emph{absorbing} -- must often contain many
monomials
(Theorems~\ref{thm:coeffs}, \ref{thm:redcoeffs}, \ref{thm:coeffs2}).
Then, we use a method from \cite{BM:ESAP} (cf. also \cite{Ko:CATE})
to find a nonzero of $f$ in the proximity of
every point of $S^N$ (Theorems~\ref{thm:nonzeroClose},
    \ref{thm:rednonzeroClose}).
    Often, our results hold in more generality for subsets of $\ob{F}^N$ that
    are of the form $\bigtimes_{i=1}^N A_i$, with all $A_i \subseteq \ob{F}$,
    instead of the more special subsets of type $S^N$;
    we will call these subsets \emph{rectangular}.
    
\section{Polynomials with many coefficients} \label{sec:many}

In this section, we prove
that certain polynomials have many
nonvanishing coefficients.

We write $\N$ for the set of positive integers,
and for $N \in \N$, we use $\ul{N}$ to denote the
set $\{1,2,\ldots, N\}$. For a commutative ring  $K$ with~$1$ and $N \in \N$, the set
$Q$ is a \emph{rectangular subset} of $K^N$ if
there are $A_1,\ldots, A_N \subseteq K$ with
$Q = A_1 \times \cdots \times A_N$. Let $E$ be a finite
subset of $\N_0^N$, and let $f = \sum_{\vb{e} \in E} c_{\vb{e}} X_1^{e_1} \cdots X_N^{e_N}$ be a polynomial in $K [X_1, \ldots, X_N]$.
We say that $f$ \emph{contains} a monomial $X_1^{e_1} \cdots X_N^{e_N}$
if $c_{(e_1, \ldots, e_N)} \neq 0$, and we define
\[
M (f) := \# \{ (e_1, \ldots e_N) \in E \mid c_{(e_1, \ldots, e_N)} \neq 0 \}
\]
to be the number of monomials contained in $f$.
Hence for $m \in \N$,
the polynomial $(X_1 + 1)^m \in \Q[X_1, \ldots, X_N]$
satisfies $M((X_1 + 1)^m) = m + 1$,
and over every commutative ring with unit and at least $2$ elements,
$M(\prod_{i=1}^N (X_i + 1)) = 2^N$.
   For a tuple $\vb{a} = (a_1, \ldots, a_N) \in Q$, we say
that a polynomial $f$ is \emph{absorbing at $\vb{a}$ for $Q$} if
     \[
     \text{for all } (x_1, \ldots, x_N) \in Q \,: \, \big(
     (\exists \, i \in \ul{N}  \,:\, x_i = a_i) \Longrightarrow
     f(x_1, \ldots, x_N) = 0 \big).
     \]
     In the case that $K$ is the finite field $\ob{F}_q$ and
     $Q = S^N$, where $S$ is a subgroup of $\algop{\ob{F}_q \setminus\{0\}}{\cdot}$
     of order $d$, then a result by E.\ Kiltz and A.\ Winterhof provides
     a lower bound on the number of monomials in a nonzero absorbing
     polynomial: Lemma~2 of \cite{KW:OTIO} proves that every nonzero
     polynomial $f$ with at least $n$ zeros in $S^N$
     and of degree at most $d-1$ in each variable
     has at least $\frac{d^N}{d^N-n}$ monomials.
     As a corollary we obtain:
     \begin{cor}[cf.\ {\cite[Lemma~2]{KW:OTIO}}]
       Let $N \in \N$, let $\ob{F}$ be a finite field,
       let $S$ be a subgroup of $\algop{\ob{F} \setminus\{0\}}{\cdot}$
       of order $d$, let $f \in \ob{F} [X_1, \ldots, X_N]$, and let
       $\vb{a} \in S^N$. If $f$ is absorbing at
       $\vb{a}$ for $S^N$ and if there is $\vb{b} \in S^N$
       with $f(\vb{b}) \neq 0$, then 
       $M(f) \ge (\frac{d}{d-1})^N$.
     \end{cor}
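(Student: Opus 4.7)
The plan is to apply the cited lemma of Kiltz and Winterhof to a suitable reduction of $f$. Because $S$ is a subgroup of $(\ob{F}\setminus\{0\},\cdot)$ of order $d$, every $s\in S$ satisfies $s^d=1$. I would replace each monomial $X_1^{e_1}\cdots X_N^{e_N}$ of $f$ by $X_1^{e_1\bmod d}\cdots X_N^{e_N\bmod d}$ and then collect like terms to obtain a polynomial $\tilde f\in \ob{F}[X_1,\ldots,X_N]$ of degree at most $d-1$ in each variable that satisfies $\tilde f(\vb{x})=f(\vb{x})$ for all $\vb{x}\in S^N$.

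From this construction I would read off three facts. First, $M(\tilde f)\le M(f)$, since the reduction can only collapse distinct monomials together or cancel their coefficients. Second, $\tilde f$ is nonzero, because $\tilde f(\vb{b})=f(\vb{b})\neq 0$. Third, $\tilde f$ is absorbing at $\vb{a}$ for $S^N$, since it agrees with $f$ pointwise on $S^N$. Consequently $\tilde f$ vanishes on every $\vb{x}\in S^N$ that shares at least one coordinate with $\vb{a}$; the complement of that set inside $S^N$ consists of the tuples disagreeing with $\vb{a}$ in every coordinate, of which there are exactly $(d-1)^N$. Hence $\tilde f$ has at least $n:=d^N-(d-1)^N$ zeros in $S^N$.

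Now the Kiltz--Winterhof estimate \cite[Lemma~2]{KW:OTIO} applied to $\tilde f$ yields
\[
M(f)\;\ge\;M(\tilde f)\;\ge\;\frac{d^N}{d^N-n}\;=\;\frac{d^N}{(d-1)^N}\;=\;\Big(\frac{d}{d-1}\Big)^N,
\]
which is the claimed lower bound. The only nontrivial step is the reduction modulo the polynomials $X_i^d-1$, and it is routine: $s^d=1$ for $s\in S$ guarantees that the reduction preserves values on $S^N$, while the collapsing of exponent classes guarantees that it cannot increase the monomial count. Once this is in place, the corollary is a direct substitution into the cited lemma.
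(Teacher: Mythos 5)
Your proposal is correct and follows the same route as the paper's own proof: reduce $f$ modulo $(X_1^d-1,\ldots,X_N^d-1)$ (which can only decrease the monomial count and preserves values on $S^N$), count at least $d^N-(d-1)^N$ zeros from the absorbing hypothesis, and apply the Kiltz--Winterhof bound. No gaps.
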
  
     \begin{proof}
       Let
       $f_1$ be the remainder of $f$ modulo
       $(
       X_1^d - 1, \ldots, X_N^d - 1)$.
       Then $M(f) \ge M(f_1)$. Since $f_1$ is absorbing on
       $S^N$, $f_1$ has at most $(d-1)^N$ nonzeros on
       $S^N$. Hence $f_1$ has at least $d^N - (d-1)^N$ zeros
       on $S^N$. Since $f_1 (\vb{b}) \neq 0$,
       \cite[Lemma~2]{KW:OTIO} yields
       $M (f_1) \ge \frac{d^N}{d^N - (d^N - (d-1)^N)} =
       (\frac{d}{d-1})^N$.
   \end{proof}    
     We complement this result by providing other situations in
     which absorbing polynomials will have many monomials.
     To this end, we consider the following class of rings:
     by a \emph{reduced ring}, we understand
     a commutative ring with~$1$ that has no nonzero nilpotent
     elements;  hence a commutative ring with~$1$ is reduced
     if and only if the implication 
     $x^2 = 0 \Rightarrow x = 0$ holds for all ring elements $x$;
     in particular, every field and every integral domain
     is reduced. 
    \begin{thm} \label{thm:coeffs}
  Let $N \in \N$, let $K$ be a reduced ring,
  let $f \in K[X_1, \ldots, X_N]$, and
  let $a_1,b_1, \ldots, a_N, b_N \in K \setminus \{0\}$
  with $a_i \neq b_i$ for all $i \in \ul{N}$.
  We assume that for each $i \in \ul{N}$, there exists $r_i \in \N$
  such that $a_i^{r_i} = b_i^{r_i}$.
  Let $Q := \bigtimes_{i=1}^N \{a_i, b_i\}$. We suppose that
  $f$ is absorbing at $(a_1,\ldots, a_N)$ for $Q$, and that
  \[
  b_1 \cdots b_N \cdot  f(b_1, \ldots, b_N) \neq 0.
  \]
  Then $M(f) \ge \prod_{i=1}^N \frac{r_i}{r_i - 1}$.
\end{thm}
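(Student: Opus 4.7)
The plan is induction on $N$. First, since $a_i^{r_i} = b_i^{r_i}$ holds at both $x = a_i$ and $x = b_i$, I would reduce $f$ modulo the relations $X_i^{r_i} = a_i^{r_i}$, producing a polynomial with $\deg_{X_i} < r_i$ for each $i$ that agrees with $f$ on $Q$ and whose monomial count is at most $M(f)$. So I may assume $\deg_{X_i} f < r_i$ for all $i$. In the base case $N = 1$, suppose for contradiction that $f = c X_1^e$ with $c \neq 0$ and $0 \leq e < r_1$. If $e = 0$, then $f(a_1) = c = 0$ contradicts $b_1 f(b_1) \neq 0$. If $e \geq 1$, then $c a_1^e = 0$ by absorption, and multiplying by $a_1^{r_1 - e}$ gives $c a_1^{r_1} = c b_1^{r_1} = 0$, whence $(b_1 f(b_1))^{r_1} = c^{r_1-1}(c b_1^{r_1}) b_1^{r_1 e} = 0$; reducedness then forces $b_1 f(b_1) = 0$, a contradiction. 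Thus $M(f) \geq 2 \geq r_1/(r_1 - 1)$.

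For the inductive step with $N > 1$, I would write $f = \sum_{j=0}^{r_N - 1} g_j(X_1, \ldots, X_{N-1}) X_N^j$ and introduce the two slices $h_a := f(X_1, \ldots, X_{N-1}, a_N)$ and $h_b := f(X_1, \ldots, X_{N-1}, b_N)$. For every $\ell \in \{0, 1, \ldots, r_N - 1\}$, form
\[
F_\ell \; := \; b_N^\ell \, h_a - a_N^\ell \, h_b \; = \; \sum_{j \neq \ell} \bigl(b_N^\ell a_N^j - a_N^\ell b_N^j\bigr)\, g_j,
\]
in which the $j = \ell$ contribution cancels. Setting $Q' := \prod_{i=1}^{N-1}\{a_i, b_i\}$, the facts that $h_a$ vanishes on all of $Q'$ and $h_b$ vanishes on $Q' \setminus \{(b_1, \ldots, b_{N-1})\}$ show that each $F_\ell$ is absorbing at $(a_1, \ldots, a_{N-1})$ for $Q'$, with $F_\ell(b_1, \ldots, b_{N-1}) = -a_N^\ell f(\vb{b})$.

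The main obstacle is verifying the non-vanishing condition $b_1 \cdots b_{N-1} \cdot F_\ell(b_1, \ldots, b_{N-1}) \neq 0$ that is needed to invoke the inductive hypothesis on $F_\ell$, namely $b_1 \cdots b_{N-1} a_N^\ell f(\vb{b}) \neq 0$. The plan is: if this element vanished, multiplying by $a_N^{r_N - \ell}$ and using $a_N^{r_N} = b_N^{r_N}$ would give $b_1 \cdots b_{N-1} b_N^{r_N} f(\vb{b}) = 0$; but the identity
\[
\bigl(b_1 \cdots b_N f(\vb{b})\bigr)^{r_N} = (b_1 \cdots b_{N-1})^{r_N - 1} f(\vb{b})^{r_N - 1} \cdot \bigl(b_1 \cdots b_{N-1} b_N^{r_N} f(\vb{b})\bigr)
\]
would force this $r_N$-th power to be zero, and reducedness would contradict $b_1 \cdots b_N f(\vb{b}) \neq 0$. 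Granted this, induction delivers $M(F_\ell) \geq \prod_{i=1}^{N-1} r_i/(r_i - 1)$ for every $\ell$. Since each $\vb{e}'$-coefficient of $F_\ell$ is a $K$-linear combination of the corresponding coefficients of the $g_j$ with $j \neq \ell$, the crude support bound yields $M(F_\ell) \leq \sum_{j \neq \ell} M(g_j)$. Summing over $\ell$ counts each $M(g_j)$ exactly $r_N - 1$ times:
\[
(r_N - 1) M(f) = \sum_\ell \sum_{j \neq \ell} M(g_j) \geq \sum_\ell M(F_\ell) \geq r_N \prod_{i=1}^{N-1} r_i/(r_i - 1),
\]
which rearranges to the desired $M(f) \geq \prod_{i=1}^N r_i/(r_i - 1)$.
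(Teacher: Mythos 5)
Your argument is correct, but it follows a genuinely different route from the paper's. The paper does not induct on $N$: it first shows, via a difference-operator computation in the group ring $\Z[(K,+)^N]$ (Lemma~\ref{lem:1mon}), that a polynomial absorbing at $(a_1,\ldots,a_N)$ with a nonzero at $(b_1,\ldots,b_N)$ must contain a monomial divisible by $X_1\cdots X_N$; then, assuming $M(f)<\prod_i r_i/(r_i-1)$, it uses a covering lemma (Lemma~\ref{lem:q}) to find $(q_1,\ldots,q_N)$ so that every exponent vector of $f$ satisfies $e_i\equiv q_i \pmod{r_i}$ in some coordinate, multiplies $f$ by $X_1^{r_1-q_1}\cdots X_N^{r_N-q_N}$, and substitutes $a_j$ for one variable in each monomial to reach a contradiction with the first step (reducedness enters there exactly as it does for you, to keep the value at $\vb{b}$ nonzero). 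Your induction with the $r_N$ combinations $F_\ell=b_N^\ell h_a-a_N^\ell h_b$, each killing the slice $g_\ell$, together with the double count $\sum_\ell\sum_{j\neq\ell}M(g_j)=(r_N-1)M(f)$, replaces both auxiliary lemmas by a single averaging step and is arguably more self-contained and elementary; the paper's route has the advantage that Lemma~\ref{lem:1mon} is a reusable statement over arbitrary commutative rings (no reducedness needed there). All the delicate points in your write-up check out: the preliminary reduction modulo $X_i^{r_i}-a_i^{r_i}$ preserves the induced function on $Q$ and cannot increase $M$, the $j=\ell$ term genuinely cancels in $F_\ell$, the nonvanishing of $b_1\cdots b_{N-1}F_\ell(b_1,\ldots,b_{N-1})$ follows from reducedness as you indicate, and $r_i\ge 2$ throughout since $a_i\neq b_i$.
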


We prepare for the proof with two lemmas.
\begin{lem} \label{lem:1mon}
  Let $N \in \N$, let $K$ be a commutative ring
  with~$1$, 
  let $p \in K [X_1, \ldots, X_N]$, 
  let $a_1,b_1, \ldots, a_N, b_N \in K$
  with $a_i \neq b_i$ for all $i \in \ul{N}$, and let
  $Q := \bigtimes_{i=1}^N \{a_i, b_i\}$. We suppose that
  $p$ is absorbing at $(a_1,\ldots, a_N)$ for $Q$ and that
  $p(b_1, \ldots, b_N) \neq 0$.
  Then $p$ contains a monomial that contains all variables
  $X_1, \ldots, X_N$,
    i.e., a monomial that is divisible by $X_1\cdots X_N$.
\end{lem}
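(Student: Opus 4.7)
The plan is to reduce the general rectangular situation to the Boolean cube $\{0,1\}^N$ via an affine change of variables and then extract the coefficient of $X_1 \cdots X_N$ by a single inclusion--exclusion identity, computed in two different ways.

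First I would introduce new variables $Y_1, \ldots, Y_N$ and set $X_i = a_i + (b_i - a_i) Y_i$, obtaining a polynomial $q(Y_1, \ldots, Y_N) := p(a_1 + (b_1 - a_1)Y_1, \ldots, a_N + (b_N - a_N) Y_N) \in K[Y_1, \ldots, Y_N]$. The substitution sends $Y_i = 0$ to $X_i = a_i$ and $Y_i = 1$ to $X_i = b_i$, so $q$ is absorbing at $(0, \ldots, 0)$ for $\{0,1\}^N$ with $q(1, \ldots, 1) = p(b_1, \ldots, b_N) \neq 0$.

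Now consider the inclusion--exclusion sum
\[
S := \sum_{\vb{e} \in \{0,1\}^N} (-1)^{N - |\vb{e}|} \, q(\vb{e}),
\]
where $|\vb{e}| = e_1 + \cdots + e_N$. I would evaluate $S$ in two ways. On the one hand, the absorbing hypothesis forces $q(\vb{e}) = 0$ whenever some $e_i = 0$, leaving only the term $\vb{e} = (1, \ldots, 1)$, so $S = q(1, \ldots, 1) = p(b_1, \ldots, b_N) \neq 0$. On the other hand, writing $p = \sum_{\alpha} c_\alpha X^\alpha$ and noting that the sum over $\{0,1\}^N$ factors as a product of sums over each coordinate, one checks that
\[
S = \sum_{\alpha} c_\alpha \prod_{i=1}^{N} \bigl(b_i^{\alpha_i} - a_i^{\alpha_i}\bigr).
\]

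Finally, if no monomial appearing in $p$ were divisible by $X_1 \cdots X_N$, then for every $\alpha$ with $c_\alpha \neq 0$ some $\alpha_i = 0$, whence the corresponding factor $b_i^0 - a_i^0 = 0$ kills that term, and the whole sum vanishes. This contradicts $S = p(b_1, \ldots, b_N) \neq 0$, so some monomial of $p$ must have all exponents positive, i.e., be divisible by $X_1 \cdots X_N$. The only step requiring care is the factorisation of $S$ and the verification that the identity $(-1)^{N - |\vb{e}|} = \prod_i (-1)^{1 - e_i}$ is used correctly; everything else is a direct calculation that holds over an arbitrary commutative ring with $1$, so no invertibility of $b_i - a_i$ is needed.
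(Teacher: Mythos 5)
Your proof is correct and is essentially the paper's argument: both evaluate the alternating (inclusion--exclusion) sum of $p$ over the $2^N$ vertices of $Q$, use the absorbing hypothesis to reduce it to $p(b_1,\ldots,b_N)\neq 0$, and observe that any monomial missing some variable $X_i$ contributes $0$ because the corresponding factor $b_i^0-a_i^0$ vanishes. The paper merely packages the same computation as a product of difference operators $\prod_i(\tau_{(b_i-a_i)\vb{u}_i}-1)$ in a group ring acting on $K[X_1,\ldots,X_N]$, while you factor the sum coefficientwise; your verification of $(-1)^{N-|\vb{e}|}=\prod_i(-1)^{1-e_i}$ and the product formula is sound over any commutative ring with $1$.
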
     
\begin{proof}
  Let $G := (K, +)$ be the additive group of $K$.
  We let $R$ be the group ring $\Z [G^N]$. Since $G$ is written
  additively, it is convenient to write an element
  $\sum_{\vb{g} \in G^N} z_{\vb{g}} \, \vb{g}$ of this group ring as
  $\sum_{\vb{g} \in G^N} z_{\vb{g}} \, \tau_{\vb{g}}$; the group ring multiplication is
  then determined by $\tau_{\vb{g}} \tau_{\vb{h}} = \tau_{\vb{g}+\vb{h}}$
  for $\vb{g}, \vb{h} \in G^N$. Since $G^N$ is abelian, $R$ is a commutative
  ring. Similar to \cite{AM:CWTR},
  we define a module operation of $R$ on
  its module $K [X_1, \ldots, X_N]$ by
  \[
      \tau_{(c_1, \ldots, c_N)} \,* \, f(X_1, \ldots, X_N) :=
      f (X_1 + c_1, \ldots, X_N + c_N)
  \]
  for $(c_1, \ldots, c_N) \in G^N$ and $f \in K[X_1, \ldots, X_N]$.
   Taking $\vb{u}_i$ to be the $i$-th unit vector $(0,\ldots,0,
  \underbrace{1}_{i\,\text{th place}},0, \ldots 0)$
  in $K^N$,
  $(b_i - a_i ) \vb{u}_i$ denotes the tuple $(0,\ldots,0,
  \underbrace{b_i-a_i}_{i\,\text{th place}},0, \ldots 0) \in G^N$,
  and we use this tuple to define $r \in R$ by
  \[
  r := \prod_{i = 1}^N (\tau_{(b_i - a_i) \vb{u}_i} - 1).
  \]
  
  We first show that for every monomial $m$ that is
  not divisible by $X_1\cdots X_N$, we have
  \begin{equation} \label{eq:r1}
    r * m  = 0.
  \end{equation}
  Since $m$ is not divisible by $X_1 \cdots X_N$, there is  $j \in \ul{N}$ be such that $X_j$ does not appear in $m$.
    Let
    $s := \prod_{i \in \ul{N} \setminus \{j\}} (\tau_{(b_i - a_i) \vb{u}_i} - 1)$
   and $t := \tau_{(b_j-a_j) \vb{u}_j} - 1$.
   Then $st = r$ and 
   we have
      \begin{multline*}
        t * m =
        m \, (X_1, \ldots, X_{j-1}, X_j + (b_j- a_j), X_{j+1}, \ldots, X_N) \\
        -
        m \, (X_1, \ldots, X_{j-1}, X_j, X_{j+1}, \ldots, X_N) 
        =
        0.
      \end{multline*}
    Thus 
    $r * m = (st) * m = s*(t*m) = s*0 = 0$, establishing~\eqref{eq:r1}.
    By expanding $r$, we obtain
    \begin{equation} \label{eq:expanded} 
      (r * p) \, (a_1,\ldots,a_N) = \sum_{I \subseteq \ul{N}}
      (-1)^{N-|I|} p(c_1\ar{I}(a_1, b_1), \ldots,
                    c_N\ar{I}(a_N, b_N)),
     \end{equation}
     where $c_i\ar{I} (a_i, b_i) = b_i$ if $i \in I$ and
     $c_i\ar{I} (a_i, b_i) = a_i$ if $i \not\in I$.
     Since $p$ is absorbing at $(a_1, \ldots, a_N)$ for
     $Q$, we have $p (x_1, \ldots, x_N) = 0$
     for $\vb{x} \in Q \setminus \{(b_1, \ldots, b_N)\}$.
     Therefore,
     all the $2^N - 1$  summands in the right hand side of~\eqref{eq:expanded}
     with $I \neq \ul{N}$ vanish, and thus
     we have
     $(r * p) (a_1, \ldots, a_N)  = (-1)^0 \, p(b_1, \ldots, b_N) \neq 0$.
     Thus $r*p \neq 0$.

    Hence $p$ contains a monomial $m$ with $r*m \neq 0$.
    Then this monomial $m$ contains all variables.
\end{proof}
  
\begin{lem} \label{lem:q}
  Let $N \in \N$, 
  let $A_1, \ldots, A_N$ be finite sets with $|A_i| > 1$ for
  all $i \in \ul{N}$, let
  $Q := A_1 \times \cdots \times A_N$, and let 
  $S$ be a subset of $Q$.
  For $i \in \ul{N}$, let $r_i := |A_i|$, and suppose
  that $|S| < \prod_{i=1}^N \frac{r_i}{r_i - 1}$.
  Then there is a tuple $(q_1,\ldots, q_N) \in Q$ such that
  for every $(s_1, \ldots, s_N) \in S$, there is an
  $i \in \ul{N}$ such that $q_i = s_i$.
\end{lem}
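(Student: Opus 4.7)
The plan is to prove the contrapositive-flavored counting statement: for each candidate $s \in S$, measure the set of tuples in $Q$ that fail to share any coordinate with $s$, and then use a union bound to show that the hypothesis on $|S|$ forces some tuple of $Q$ to avoid being ``bad'' for every $s$.

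Concretely, for each $s = (s_1, \ldots, s_N) \in S$, I would define
\[
  B_s := \{(q_1, \ldots, q_N) \in Q \mid q_i \neq s_i \text{ for all } i \in \ul{N}\}.
\]
A tuple $(q_1, \ldots, q_N)$ has the property stated in the lemma if and only if it lies in $Q \setminus \bigcup_{s \in S} B_s$, so the task reduces to showing this complement is nonempty.

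The key step is a direct count: since $B_s$ is the rectangular set $\bigtimes_{i=1}^N (A_i \setminus \{s_i\})$, we get $|B_s| = \prod_{i=1}^N (r_i - 1)$, independent of $s$. By the union bound,
\[
  \Bigl| \bigcup_{s \in S} B_s \Bigr| \le |S| \cdot \prod_{i=1}^N (r_i - 1).
\]
The hypothesis $|S| < \prod_{i=1}^N \frac{r_i}{r_i - 1}$ rearranges (using $r_i > 1$, so the denominators are positive) to $|S| \cdot \prod_{i=1}^N (r_i - 1) < \prod_{i=1}^N r_i = |Q|$. Hence $\bigcup_{s \in S} B_s$ is a proper subset of $Q$, and any $(q_1, \ldots, q_N)$ in the complement satisfies the conclusion.

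No real obstacle is expected here; the argument is a straightforward inclusion--exclusion style pigeonhole, and the bound in the hypothesis has been engineered to be exactly the threshold at which the union bound leaves room in $Q$. The only thing to double-check is that $r_i > 1$ is used to ensure the factor $\frac{r_i}{r_i - 1}$ and the multiplication by $\prod(r_i - 1)$ are legitimate; this is guaranteed by the assumption $|A_i| > 1$.
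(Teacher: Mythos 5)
Your proof is correct and follows essentially the same argument as the paper: the paper likewise defines, for each $\vb{s} \in S$, the set $D(\vb{s})$ of tuples disagreeing with $\vb{s}$ in every coordinate, computes its size as $\prod_{i=1}^N (r_i-1)$, and applies the union bound together with the hypothesis on $|S|$ to find a tuple outside all the $D(\vb{s})$. Nothing is missing.
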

\begin{proof}
  The idea of the proof is that each element of $S$ excludes
  only $\prod_{i=1}^N (r_i - 1)$ possible choices for $\vb{q}$,
  and the bound on $|S|$ guarantees that not all elements of $Q$
  become excluded. Formally, we proceed as follows:
  for each $\vb{s} \in S$, let
  \[
  D (\vb{s}) := \{ \vb{q} \in Q \mid \text{ for all } i \in \ul{N}:
  q_i \neq s_i \}.
  \]
  Then $|D (\vb{s})| = \prod_{i=1}^N (r_i - 1)$.
  Hence
  \[
     |\bigcup_{\vb{s} \in S} D (\vb{s})| \le
     |S| \cdot \prod_{i=1}^N (r_i - 1) <
     (\prod_{i=1}^N \frac{r_i}{r_i - 1}) \cdot
     (\prod_{i=1}^N (r_i - 1)) = |Q|.
   \]
   Therefore there is an element $\vb{q} \in Q$ such that
   for all $\vb{s} \in S$, we have $\vb{q} \not\in D(\vb{s})$.
   Then for every $\vb{s} \in S$, since $\vb{q} \not\in D(\vb{s})$,
   there is a $j \in \ul{N}$
   such that $q_j = s_j$.
\end{proof}

\begin{proof}[Proof of Theorem~\ref{thm:coeffs}]
  Seeking a contradiction, we suppose that
  \[
  f = \sum_{\vb{e}\in E} c_{\vb{e}} X_1^{e_1} \cdots X_N^{e_N}
  \]
  with 
  $|E| < \prod_{i=1}^N \frac{r_i}{r_i - 1}$.
  Denoting the remainder of the division of $e$ by $r$ in $\Z$
  by $\rem (e,r)$,
  we define \[ S := \{ (\rem (e_1, r_1), \ldots, \rem (e_N, r_N)) \mid
  (e_1, \ldots, e_N) \in E \} \] and
  $Q_1 := \bigtimes_{i=1}^N \{0,1, \ldots, r_i - 1\}$.
  Then 
  $S \subseteq Q_1$ and 
  $|S| \le |E|$.
  By Lemma~\ref{lem:q}, there exists $(q_1, \ldots, q_N) \in Q_1$
  such that for all $(e_1, \ldots, e_N) \in E$, there
  is $i \in \ul{N}$ such that $\rem (e_i, r_i) = q_i$.
  Let
  \[
  p(X_1,\ldots,X_N) := X_1^{r_1 - q_1} \cdots X_N^{r_N - q_N} f(X_1, \ldots, X_N).
  \]
  Let $m = c_{\vb{e}} X_1^{s_1} \cdots X^{s_N}$ with
  $s_i = r_i -q_i + e_i$ be a monomial from $p$.
  By the choice of $\vb{q}$, there is
  $j \in \ul{N}$ be such that $\rem (e_j, r_j) = q_j$.
   Then $s_j = 
      r_j - q_j + e_j = r_j - \rem(e_j, r_j) + e_j$, and therefore
      $r_j$ divides $s_j$, and thus $a_j^{s_j} = b_j^{s_j}$.
      We set
      \[
      m' := m(X_1, \ldots, X_{j-1}, a_j, X_{j+1}, \ldots, X_N).
      \]
      Then since $a_j^{s_j} = b_j^{s_j}$, we have
      $m(\vb{x}) = m' (\vb{x})$ for all $\vb{x} \in Q$.
      Hence $p_1 = p - m+ m'$ induces the same function on $Q$
      as $p$ and satisfies $M(p_1) \le M(p)$.
      Repeating this for all monomials in $p$, we
      obtain a polynomial $p_2$ that induces the same function on
      $Q$ as $p$ and has no monomial divisible by $X_1 \cdots X_N$.
      Furthermore,
      $M(p_2) \le M(p) = M(f) \le |E| < \prod_{i=1}^N  \frac{r_i}{r_i - 1}$.
      Now
      $p_2 (\vb{x}) = 0$ for $\vb{x} \in Q \setminus \{(b_1, \ldots, b_N)\}$
      and $p_2 (b_1\ldots,b_n) =
      b_1^{r_1 - q_1} \cdots b_N^{r_N - q_N} \cdot f(b_1, \ldots, b_N)$.
      Suppose that
      \[
      b_1^{r_1 - q_1} \cdots b_N^{r_N - q_N} \cdot f(b_1, \ldots, b_N)
      =
      0.
      \]
      Then setting $k := \max \,\{ r_i - q_i \mid i \in \ul{N} \}$,
      we obtain
      \[
      (b_1 \cdots b_N \cdot f(b_1, \ldots, b_N))^k = 0,
      \]
      which implies $b_1 \cdots b_N \cdot f(b_1, \ldots, b_N) = 0$
      because $K$ is reduced.
      This contradicts the assumptions, and therefore
      $p_2 (b_1\ldots,b_n) \neq 0$.
      This polynomial $p_2$ contradicts Lemma~\ref{lem:1mon}.

      Hence $f$ contains at least $\prod_{i=1}^N \frac{r_i}{r_i - 1}$
      monomials.
\end{proof}

\section{Lower bounds on the density of nonzeros}
  For a commutative ring $K$ with~$1$, let 
  $\Tor (K)$ be the torsion part of the (multiplicative) group
  of units of $K$, i.e., the set
  $\{t \in K \mid \exists n \in \N \, : \, t^n = 1\}$; hence
  if $K$ is a field, $\Tor (K)$ is the set of roots of unity of $K$.
  For $t \in \Tor (K)$, we write
  $\ord (t)$ for its multiplicative order, i.e., for
  $\min \{n \in \N \mid t^n = 1\}$.
  For two elements $\vb{a}, \vb{b} \in K^N$, their
  \emph{Hamming-distance} $d_H(\vb{a}, \vb{b})$ is defined
  by \(
     d_H (\vb{a}, \vb{b}) := \# \{ i \in \ul{N} \mid
     a_i \neq b_i \}.
    \) 
  \begin{thm} \label{thm:nonzeroClose}
    Let $N \in \N$,
    let $K$ be an integral domain,
    let $A_1, \ldots, A_N$ be
    finite nonempty subsets of $\Tor (K)$,
    and let $Q := \bigtimes_{i=1}^N A_i$.
    Let
    \(
    r := \max \, (\{\ord(u/v) \mid u, v \in A_i, \, i \in \ul{N} \} \cup \{2\}),
    \)
    let
    $t := \frac{r}{r-1}$,
    and let $p \in K [X_1, \ldots, X_N]$.
        We assume that there is $\vb{c} \in Q$ such that
    $p (\vb{c}) \neq 0$.
    Then for every $\vb{a} \in Q$, there is $\vb{b} \in Q$
    with $p(\vb{b}) \neq 0$ and
    \(
    d_H (\vb{a}, \vb{b}) \le \log_t (M (p)).
    \)
  \end{thm}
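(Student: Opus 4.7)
The plan is to reduce the statement to Theorem~\ref{thm:coeffs} by zooming in on the nonzero of $p$ in $Q$ that is closest in Hamming distance to the given point $\vb{a}$. Minimality of this distance will be precisely the absorbing condition, after we freeze the coordinates on which this closest nonzero agrees with $\vb{a}$.

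Fix $\vb{a} \in Q$ and let $d$ be the minimum value of $d_H(\vb{a}, \vb{b}')$ over those $\vb{b}' \in Q$ with $p(\vb{b}') \neq 0$; the existence of $\vb{c}$ ensures this minimum is attained. Fix such a minimizer $\vb{b}$, let $I := \{i \in \ul{N} \mid a_i \neq b_i\}$, and after reordering variables assume $I = \ul{d}$. The target is to prove $t^d \le M(p)$. Define
\[
q(X_1, \ldots, X_d) := p(X_1, \ldots, X_d, a_{d+1}, \ldots, a_N);
\]
substituting constants cannot create new monomials, so $M(q) \le M(p)$.

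Next I would check the hypotheses of Theorem~\ref{thm:coeffs} for $q$ with $Q_0 := \bigtimes_{i=1}^d \{a_i, b_i\}$. For absorbingness at $(a_1, \ldots, a_d)$: any $\vb{x} \in Q_0$ with $x_j = a_j$ for some $j$ extends, on appending $a_{d+1}, \ldots, a_N$, to a point of $Q$ whose Hamming distance to $\vb{a}$ is at most $d-1$, so $p$ (hence $q$) vanishes there by the minimality of $d$. Since each $A_i \subseteq \Tor(K)$, every $b_i$ is a unit of $K$ and $a_i/b_i$ has finite multiplicative order $r_i$, whence $a_i^{r_i} = b_i^{r_i}$ with $2 \le r_i \le r$ (the lower bound because $a_i \neq b_i$, the upper by the definition of $r$). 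The nonvanishing hypothesis $b_1 \cdots b_d \cdot q(b_1, \ldots, b_d) \neq 0$ holds because $K$ is an integral domain, the $b_i$ are units, and $q(b_1, \ldots, b_d) = p(\vb{b}) \neq 0$. An integral domain is reduced, so Theorem~\ref{thm:coeffs} gives
\[
M(p) \ge M(q) \ge \prod_{i=1}^d \frac{r_i}{r_i - 1} \ge \left(\frac{r}{r-1}\right)^d = t^d,
\]
using that $s \mapsto s/(s-1)$ is decreasing on $[2,\infty)$. Taking $\log_t$ yields $d \le \log_t M(p)$, as desired.

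The principal obstacle is conceptual rather than computational: recognising that the ``closest nonzero'' $\vb{b}$ determines a natural two-element-per-coordinate box on which the restricted polynomial $q$ is absorbing, with $(b_1, \ldots, b_d)$ as its unique possibly-nonzero corner. Once this setup is in place, the lower bound from Theorem~\ref{thm:coeffs} matches the target exponent $t^d$ after bounding each local order $r_i$ by the global $r$, and the monomial-count inequality $M(q) \le M(p)$ is immediate.
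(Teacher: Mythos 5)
Your proof is correct and follows essentially the same route as the paper: both pick the nonzero $\vb{b}$ of minimal Hamming distance to $\vb{a}$, restrict $p$ to the two-element box $\bigtimes\{a_i,b_i\}$ on the disagreeing coordinates, observe that minimality gives exactly the absorbing condition, and invoke Theorem~\ref{thm:coeffs} together with $\ord(a_i/b_i)\le r$ and the monotonicity of $s\mapsto s/(s-1)$. The only (immaterial) differences are that the paper treats the case $d=0$ explicitly, whereas you leave it implicit, and that you spell out $M(q)\le M(p)$, which the paper glosses over.
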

  \begin{proof}
    We note that the main idea of this proof is taken from the proof
    of \cite[Theorem~2]{BM:ESAP}.
      Let $\vb{b} \in Q$ be such that $p (\vb{b}) \neq 0$ and
   $d_H (\vb{a}, \vb{b})$ is minimal in
    $\{ d_H (\vb{a}, \vb{z}) \mid \vb{z} \in Q, \, p (\vb{z}) \neq 0 \}$.
      Let $k := d_H (\vb{a}, \vb{b})$. In the case $k = 0$,
      the claimed inequality clearly holds. For the case $k > 0$,
      we let $i_1, \ldots, i_k \in \ul{N}$
    with $i_1 < i_2 < \ldots < i_k$ and
    $a_{i_1} \neq b_{i_1}, \ldots, a_{i_k} \neq b_{i_k}$.
    Let
    \begin{multline*} \label{eq:fp}
     f (X_1,\ldots, X_k) :=
     p (a_1, \ldots, a_{i_1 - 1}, X_1, a_{i_1 + 1}, \ldots,
           a_{i_2 - 1}, X_2, a_{i_2 + 1}, \ldots, \\
       \ldots, a_{i_k - 1}, X_k, a_{i_k + 1}, \ldots, a_N).
    \end{multline*}
    Let $Q_1 := \bigtimes_{m=1}^k \{a_{i_m}, b_{i_m} \}$.
    By the minimality of $d_H (\vb{a}, \vb{b})$,
        $f (\vb{x}) = 0$ for all
    $\vb{x} \in Q_1 \setminus \{ (b_{i_1}, \ldots, b_{i_k}) \}$, and
      $f (b_{i_1}, \ldots, b_{i_k}) = p (b_1, \ldots, b_N) \neq 0$.
        Now by Theorem~\ref{thm:coeffs}, we obtain
    \[
        M (p) \ge \prod_{m = 1}^k \frac{\ord(b_{i_m}/a_{i_m})}{\ord (b_{i_m}/a_{i_m}) - 1}
        \ge \prod_{m = 1}^k \frac{r}{r - 1}
        = \left(\frac{r}{r-1}\right)^k = t^k,
    \]
    and therefore $\log_t (M(p)) \ge k$.  Hence
    $d_H (\vb{a}, \vb{b}) \le \log_t (M (p))$.
  \end{proof}
  From this result, we obtain a lower bound for the number
  of nonzeros of a polynomial. In order to express this
  bound, we use two functions from coding theory (cf.\ \cite{vL:ITCT}):
  $\operatorname{Vol}_q (n, k) := \sum_{i=0}^{\lfloor k \rfloor} {n \choose i} (q-1)^i$, which is the number of points in $\{0,\ldots, q-1\}^n$ with at most
  $k$ entries different
  from~$0$, and 
  the entropy function 
  $H_q : [0,1] \to \R$, 
  $H_q(0) = H_q(1) = 0$ and $H_q(x) = x \log_q (q-1) - x \log_q (x) + (1-x) \log_q (1-x)$ for $x$ with $0 < x < 1$, which is the entropy of a source
  producing $q$ symbols with probabilities
  $(1-x, \frac{x}{q-1}, \ldots,
  \frac{x}{q-1})$ divided by $\log_2 (q)$. 
  \begin{cor} \label{cor:numbernonzeros}
    Let $N \in \N$,
    let $K$ be an integral domain, let $S$ be
    a finite subset of $\Tor (K)$ with $|S| \ge 2$,
    let $Q := S^N$, and let $s := |S|$.
    Let
    \(
    r := \max \{\ord(u/v) \mid u,v \in S \},
    \)
    let
    $t := \frac{r}{r-1}$, 
    and let $p \in K[X_1, \ldots, X_N]$.
        Let $W := \{ \vb{b} \in S^N \mid p (\vb{b}) \neq 0 \}$.
    We assume $W \neq \emptyset$. Then we have
   \[
         |W|
         \ge
         \frac{s^N}{\operatorname{Vol}_s (N,  \log_t (M(p)))}
         \ge \frac{s^N}{(N s)^{\log_t (M(p))}},
   \]      
   and if $M(p) \le t^{\frac{N(q-1)}{q}}$,
   $|W| \ge s^{N (1 - H_q (\frac{\log_t (M(p))}{N}))}$.
  \end{cor}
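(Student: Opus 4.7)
The strategy is to cover the cube $S^N$ by Hamming balls of radius $k := \log_t M(p)$ centred at the nonzeros of $p$ supplied by Theorem~\ref{thm:nonzeroClose}, and then to bound the cardinality of each ball in three different ways to obtain the three inequalities.

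Concretely, set $B(\vb{b}, k) := \{\vb{a} \in S^N \mid d_H(\vb{a}, \vb{b}) \le k\}$. Since $|S| \ge 2$, any two distinct $u, v \in S$ give $\ord(u/v) \ge 2$, so the $r$ in the corollary coincides with the $r$ from Theorem~\ref{thm:nonzeroClose} (whose definition additionally takes $2$ into the max), and the theorem applies. Applied at every $\vb{a} \in S^N$ it yields $S^N = \bigcup_{\vb{b} \in W} B(\vb{b}, k)$; since $|B(\vb{b}, k)| = \sum_{i=0}^{\lfloor k \rfloor} \binom{N}{i}(s-1)^i = \operatorname{Vol}_s(N, k)$ (pick the $i$ positions where $\vb{a}$ differs from $\vb{b}$, then one of the $s-1$ other elements of $S$ at each), a union bound gives $s^N \le |W| \cdot \operatorname{Vol}_s(N, k)$, which is the first inequality.

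For the second inequality I will show $\operatorname{Vol}_s(N, k) \le (Ns)^k$ by an encoding argument: every point $\vb{a}$ at Hamming distance at most $\lfloor k \rfloor$ from a fixed centre can be obtained by starting at the centre and applying a sequence of $\lfloor k \rfloor$ ``moves'', where each move is either a pair (position, target value) with $N(s-1)$ options, or a no-op. Since each such $\vb{a}$ is produced by at least one sequence, $\operatorname{Vol}_s(N, k) \le (N(s-1)+1)^{\lfloor k \rfloor} \le (Ns)^{\lfloor k \rfloor} \le (Ns)^k$, the last two steps using $N \ge 1$ and $Ns \ge 2$.

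For the third inequality (reading $q$ as $s$), the hypothesis $M(p) \le t^{N(q-1)/q}$ is equivalent to $k/N \le (q-1)/q$, which is exactly the regime in which the classical coding-theoretic entropy bound $\operatorname{Vol}_q(N, k) \le q^{N H_q(k/N)}$ from~\cite{vL:ITCT} holds; combining this with the first inequality gives $|W| \ge s^{N(1 - H_q(k/N))}$. The substantive content has already been established in Theorem~\ref{thm:nonzeroClose}; the remaining work is mostly bookkeeping, and the only mild obstacle is choosing upper bounds on the Hamming-ball volume that are tight enough to deliver the clean bound $(Ns)^k$ in the second inequality.
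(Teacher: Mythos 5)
Your proposal is correct and follows essentially the same route as the paper: cover $S^N$ by Hamming balls of radius $\log_t(M(p))$ centred at the nonzeros via Theorem~\ref{thm:nonzeroClose}, apply a union bound, and then bound $\operatorname{Vol}_s(N,k)$ by the elementary $(Ns)^k$ encoding argument and by the standard entropy estimate from coding theory. Your explicit check that the corollary's $r$ agrees with the theorem's $r$ (because $|S|\ge 2$ forces $\ord(u/v)\ge 2$ for some pair) is a detail the paper leaves implicit, but otherwise the arguments coincide.
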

  \begin{proof}
  For
  $\vb{b} \in Q$ and $R>0$, let $B (\vb{b}, R) := \{ \vb{a} \in Q \mid d_H (\vb{a},\vb{b})
  \le R\}$. From Theorem~\ref{thm:nonzeroClose}, we obtain
  \[
  \bigcup_{\vb{b} \in W} B (\vb{b}, \log_t (M(p))) = Q,
  \]
  which implies
  $|W| \cdot \operatorname{Vol}_s (N, \log_t (M(p))) \ge s^N$.
   The other equalities follow from known upper bounds for
  $\operatorname{Vol}_s (N, k)$ for $k \in \N_0$: for the comparison with
  the entropy function $H_q$, see \cite[p.65]{vL:ITCT}.
  The bound
  $|\operatorname{Vol}_s (N,k)| \le
   (Ns)^k$
  is of elementary nature:
  we map every $\vb{a}$ with $d_H (\vb{b}, \vb{a}) \le k$
  to a list $(i_1, s_1), \ldots, (i_k, s_k)$ of elements
  of $\ul{N} \times S$ with the property that
  $a_{i_l} = s_l$ for $l \in \ul{k}$ and $a_j = b_j$ for
  $j \not\in \{i_l \mid l \in \ul{k} \}$. In other words,
  for every $\vb{a}$ we list the changes made when turning
  $\vb{b}$ into $\vb{a}$. Since there are at most
  $(N |S|)^k$ such lists, we obtain  $|\operatorname{Vol}_k (N,k)| \le
   (Ns)^k$.
  \end{proof}
  We note that in the case that $K = \ob{F}_q$ and $S$ is
  a subgroup of $\algop{\ob{F}_q\setminus\{0\}}{\cdot}$, \cite{KW:OTIO}
  provides the much stronger bound $|W| \ge \frac{s^N}{M(p)}$.
  Our approach via absorbing functions allows to generalize
  the case $s=2$ of this result:
       \begin{thm} \label{thm:2elements}
  Let $N \in \N$, let $K$ be an integral domain,
  let $a_1,b_1, \ldots, a_N, b_N \in K \setminus \{0\}$
  with $a_i \neq b_i$ for all $i \in \ul{N}$.
  We assume that there is $r \in \N$ such that for each $i \in \ul{N}$,
  we have $a_i^{r} = b_i^{r}$. Let $t := \frac{r}{r-1}$,
  let $Q := \bigtimes_{i=1}^N \{a_i, b_i\}$, 
  let $p \in K[X_1, \ldots, X_N]$, and
   let $W := \{ \vb{c} \in Q \mid p (\vb{c}) \neq 0 \}$.
   We assume $W \neq \emptyset$. Then we have
   \(
         |W|
         \ge
         2^{N - \log_t (M (p))}.
   \)      
 \end{thm}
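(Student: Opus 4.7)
The plan is to bypass the volume-covering estimate of Corollary~\ref{cor:numbernonzeros}, which is too weak in the 2-element setting, by combining a non-isolation consequence of Theorem~\ref{thm:coeffs} with an inductive counting argument. Set $k := \lfloor \log_t M(p) \rfloor$. The first step is to show that no axis-aligned sub-rectangle of $Q$ of dimension $k+1$ can meet $W$ in exactly one point. Suppose for contradiction that for some $I \subseteq \ul{N}$ with $|I| = k+1$ and some $\vb{y} \in \prod_{i \notin I}\{a_i, b_i\}$, the sub-rectangle $R := \prod_{i \in I}\{a_i, b_i\} \times \{\vb{y}\}$ satisfies $W \cap R = \{\vb{b}\}$ for a unique $\vb{b}$. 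Write $(b_i^*)_{i \in I}$ for the $I$-coordinates of $\vb{b}$, and let $(a_i^*)_{i \in I}$ denote their complements in the respective pairs $\{a_i, b_i\}$. The polynomial $f \in K[X_i : i \in I]$ obtained from $p$ by substituting $X_j = y_j$ for each $j \notin I$ is then absorbing at $(a_i^*)_{i \in I}$ for $\prod_{i \in I}\{a_i^*, b_i^*\}$, and $\prod_{i \in I} b_i^* \cdot f((b_i^*)_{i \in I}) \neq 0$ because $K$ is an integral domain and $f((b_i^*)_{i \in I}) = p(\vb{b}) \neq 0$. Theorem~\ref{thm:coeffs} thus yields $M(f) \ge t^{k+1}$, contradicting $M(f) \le M(p) < t^{k+1}$.

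The second step is a purely combinatorial claim that I would prove by induction on $N$: if $\emptyset \neq U \subseteq Q$ has the property that every $(k+1)$-dimensional sub-rectangle of $Q$ that meets $U$ meets it in at least two points, then $|U| \ge 2^{N-k}$. Partition $U$ by the value of coordinate $N$ into slabs $U_0$ and $U_1$. If both are nonempty, the sub-rectangles that fix coordinate $N$ show that each projection $U_j'$ of $U_j$ onto $\prod_{i<N}\{a_i, b_i\}$ satisfies the same hypothesis in dimension $N-1$ with parameter $k$, and the inductive bounds $|U_j'| \ge 2^{N-1-k}$ sum to $|U| \ge 2^{N-k}$. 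If one slab, say $U_0$, is empty, then the $(k+1)$-dimensional sub-rectangles that vary coordinate $N$ contribute to $U$ only via the $k$-dimensional slice lying in the nonempty slab, so the hypothesis translates into: every $k$-dimensional sub-rectangle through a point of $U_1'$ contains at least two points of $U_1'$. Induction in dimension $N-1$ with parameter $k-1$ then yields $|U_1'| \ge 2^{(N-1)-(k-1)} = 2^{N-k}$. The base cases $k = 0$ (the hypothesis forces $U$ to be closed under single-coordinate flips, hence $U = Q$) and $N \le k$ (trivial) complete the induction.

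Combining the two steps applied with $U = W$ gives $|W| \ge 2^{N-k} \ge 2^{N - \log_t M(p)}$. The main technical obstacle is the asymmetric case in the combinatorial induction, where one slab of $U$ is empty; the key observation is that the hypothesis then descends to the nonempty slab with the parameter $k$ decreased by one, which is what closes the induction.
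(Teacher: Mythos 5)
Your proof is correct, but it is organized quite differently from the paper's. The paper argues by induction on $|W|$: in the base case $|W|=1$, the polynomial $p$ is absorbing at the vertex of $Q$ opposite to its unique nonzero, so Theorem~\ref{thm:coeffs} gives $M(p)\ge t^N$ directly; in the inductive step it picks a coordinate in which two elements of $W$ differ, restricts $p$ to the two corresponding slabs, applies the induction hypothesis to each restricted polynomial (using $M(p_j)\le M(p)$), and adds the two bounds. You instead distill the use of Theorem~\ref{thm:coeffs} into a single geometric property of $W$ --- no axis-parallel sub-rectangle of dimension $k+1$, with $k=\lfloor\log_t M(p)\rfloor$, meets $W$ in exactly one point --- and then prove a self-contained combinatorial lemma about subsets of the discrete box with this non-isolation property. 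Both proofs rest on the same two ingredients (Theorem~\ref{thm:coeffs} applied to a restriction of $p$ having an isolated nonzero, plus a coordinate-splitting doubling induction), but your factorization confines the algebraic input to sub-rectangles of the single dimension $k+1$, produces a reusable purely combinatorial statement, and yields the marginally sharper integer-exponent bound $|W|\ge 2^{N-\lfloor\log_t M(p)\rfloor}$. The one delicate point, the slab-empty case of your induction where the parameter must drop from $k$ to $k-1$, is handled correctly: a $k$-dimensional sub-rectangle of the nonempty slab extends to a $(k+1)$-dimensional one of $Q$ by freeing the split coordinate, and the two guaranteed points of $W$ it contains necessarily lie back in that slab. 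The only unstated edge cases (e.g.\ $k+1>N-1$ in the two-slab case) reduce to your trivial base case $N\le k$ and cause no harm.
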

 \begin{proof}
   We proceed by induction on $|W|$.
   We first consider the case $|W| = 1$.
   In this case there is exactly one $\vb{u} \in Q$ with
   $p(\vb{u}) \neq 0$. Let $\vb{v}$ be the vertex opposite
   to $\vb{u}$ in $Q$, i.e., the unique $\vb{v} \in Q$ with
   $d_H (\vb{u}, \vb{v}) = N$. Then $p$ is absorbing at $\vb{v}$ for
   $Q$. By Theorem~\ref{thm:coeffs}, we have $M(p) \ge t^N$, and thus
   $2^{N - \log_t (M (p))} \le 1 \le |W|$.
   For the induction step, suppose that $|W| \ge 2$. Since
   $W \neq \emptyset$, we have $p \neq 0$ and thus $M(p) \ge 1$.
   In the case $N=1$, we have $2^{N - \log_t (M(p))} \le 2^N = 2 \le |W|$.
   If $N \ge 2$, we choose $\vb{u}, \vb{v} \in W$ with $\vb{u} \neq \vb{v}$.
   Let $i \in \ul{N}$ be such that $u_i \neq v_i$; without loss of generality,
   $u_i = a_i$ and $v_i = b_i$.
   We can then write
   \begin{equation} \label{eq:W}
   W = \{ \vb{c} \in W \setsuchthat c_i = a_i \} \cup
   \{ \vb{c} \in W \setsuchthat c_i = b_i \}.
   \end{equation}
   as the union of two disjoint nonempty sets.
   We define
   \begin{equation*}
      \begin{array}{rcl}
        p_1 (X_1, \ldots, X_{N-1}) & := &
        p(X_1, \ldots, X_{i-1}, a_i, X_i, \ldots, X_{N-1}), \\
        p_2 (X_1, \ldots, X_{N-1}) & := &
        p(X_1, \ldots, X_{i-1}, b_i, X_i, \ldots, X_{N-1}).
      \end{array}
   \end{equation*}
   For $j \in \{1,2\}$, let
   $W_j := \{\vb{c}' \in K^{N-1} \mid  p_j (\vb{c}') \neq 0\}$.
   Then $|\{ \vb{c} \in W \,:\, c_i = a_i \}| = |W_1|$.
   Since $W_1$ is not
   empty, the induction hypothesis yields $|W_1| \ge 2^{N-1 - \log_t (M(p_1))}$.
   Similarly, $|\{ \vb{c} \in W \,:\, c_i = b_i \}| = |W_2|$
   and $|W_2| \ge 2^{N-1 - \log_t (M(p_2))}$.
   Hence from~\eqref{eq:W}, we obtain
   $|W| \ge 2^{N-1 - \log_t (M(p_1))} + 2^{N-1 - \log_t(M(p_2))} \ge
            2^{N-1 - \log_t (M(p))} + 2^{N-1 - \log_t (M(p))} =
            2 \cdot  2^{N-1 - \log_t (M(p))} = 2^{N - \log_t (M(p))}$,
   which completes the induction step.
 \end{proof}
 
  \section{Applications to zero testing} \label{sec:zt}
 Theorem~\ref{thm:nonzeroClose} provides a ``black box'' test
 to check whether a polynomial over a finite field
 $\ob{F}$ vanishes whenever all its arguments
 are chosen from a given subset~$S$.
  \begin{cor} \label{cor:testset}
    Let $\ob{F}$ be a finite field with $q > 2$ Elements, let
    $t := \frac{q-1}{q-2}$, and
    let $S \subseteq \ob{F} \setminus \{0\}$. There is an algorithm
    that given $M, N \in \N$ and a polynomial
    $p \in \ob{F}[X_1, \ldots, X_N]$ that is the sum of at most $M$ monomials,
    decides whether $p(\vb{s}) = 0$ for all $\vb{s} \in S^N$
    and uses at most
    \[
    \max(1, \left( \begin{smallmatrix} N \\ \lfloor \log_t(M) \rfloor
                   \end{smallmatrix} \right) )\cdot  |S|^{
                 \lfloor \log_t (M) \rfloor } 
    \]
    evaluations of $p$.
  \end{cor}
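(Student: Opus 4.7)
The plan is that a single Hamming ball of radius $K := \lfloor \log_t(M) \rfloor$ around any fixed point $\vb{a} \in S^N$ is already a testing set. Concretely, if $S = \emptyset$ we return ``yes'' without any evaluation; otherwise we pick any $\vb{a} \in S^N$ and evaluate $p$ at every $\vb{b} \in S^N$ with $d_H(\vb{a}, \vb{b}) \le K$, and answer ``$p$ vanishes on $S^N$'' precisely when all those values are zero. The ``only if'' direction is immediate, so the real work is the ``if'' direction and a count of the ball.

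For correctness I would appeal directly to Theorem~\ref{thm:nonzeroClose}, applied with the integral domain taken to be $\ob{F}$ itself and $A_1 = \cdots = A_N = S$. The hypothesis $A_i \subseteq \Tor(\ob{F})$ is automatic because $\ob{F} \setminus \{0\}$ is cyclic of order $q-1$, and the parameter $r$ of that theorem satisfies $r \le q-1$, since every $\ord(u/v)$ with $u,v \in S$ divides $q-1$. Hence $r/(r-1) \ge (q-1)/(q-2) = t$, which gives $\log_{r/(r-1)}(M(p)) \le \log_t(M(p)) \le \log_t(M)$. Assuming $p$ is not identically zero on $S^N$, the theorem furnishes $\vb{b} \in S^N$ with $p(\vb{b}) \ne 0$ and $d_H(\vb{a}, \vb{b}) \le \log_t(M)$; integrality of the Hamming distance upgrades this to $d_H(\vb{a}, \vb{b}) \le K$, so $\vb{b}$ is one of the tested points and is detected.

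For the counting step, the ball $\{\vb{b} \in S^N \mid d_H(\vb{a}, \vb{b}) \le K\}$ has exactly $\sum_{j=0}^{K} \binom{N}{j}(|S|-1)^j$ elements. I would bound this by $\binom{N}{K}|S|^K$ via the identity $\binom{N}{K}\binom{K}{j} = \binom{N}{j}\binom{N-j}{K-j}$ (which for $0 \le j \le K \le N$ yields $\binom{N}{K}\binom{K}{j} \ge \binom{N}{j}$) combined with the binomial expansion $|S|^K = \sum_{j=0}^{K} \binom{K}{j}(|S|-1)^j$. The outer $\max(1, \cdot)$ covers the degenerate cases $K = 0$ (where the ball is $\{\vb{a}\}$ and one evaluation suffices) and $K > N$ (where $\binom{N}{K} = 0$ by convention but $S^N$ has at most $|S|^N \le |S|^K$ points).

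I do not anticipate a real obstacle: the heavy lifting has been packaged in Theorem~\ref{thm:nonzeroClose}, and what remains is the parameter comparison $r \le q-1$ and the elementary volume inequality above.
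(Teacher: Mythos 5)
Your proposal is correct and follows essentially the same route as the paper: test $p$ on a Hamming ball of radius $\lfloor \log_t(M)\rfloor$ around one point of $S^N$, invoke Theorem~\ref{thm:nonzeroClose} (noting $r \le q-1$, hence $r/(r-1) \ge t$) for correctness, and bound the ball size by $\binom{N}{K}|S|^K$. The only cosmetic differences are that the paper centers the ball at a constant tuple $(s,\ldots,s)$ and counts the ball by listing the positions and values of the changed coordinates, whereas you use the subset-of-subset binomial identity; both are valid.
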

  \begin{proof}
    The algorithm proceeds as follows:
    we choose an $s \in S$. Then
    we check whether $p (\vb{u}) = 0$ for all
    $(u_1, \ldots, u_N) \in S^N$ that contain at most
    $k := \lfloor \log_t(M) \rfloor$ entries 
    that are different from $s$. Since the order of every
    element in $\ob{F}$ is at most $q-1$,
    Theorem~\ref{thm:nonzeroClose} yields that if $p$ vanishes
    on all these arguments, then $p$ vanishes on all of $S^N$.
    Now if $k \le N$, then every tuple in $S^N$ with at most
    $k$ entries that are different from $s$ can be chosen
    by first choosing $T$ as one of the ${N \choose k}$ subsets
    of $\ul{N}$ with $k$ elements, then setting the $u_i$ with
    $i \in T$ to any value in $S$ (there are $|S|^k$ possibilities
    for this) and finally setting all $u_i$ with $i \not\in T$ to $s$.
    Hence there are at most ${N \choose k} |S|^k$ such tuples.
    If $k > N$, then $p$ is tested on all tuples in $S^N$,
    and in this case we also have
    $|S^N| \le  \max(1, {N \choose k}) \cdot  |S|^{k}$.
 \end{proof}   
  Hence if we measure the size of the input polynomial in such a way
  that the size $n$ of $p$ is at least $\max(N,M)$, where $N$ is its
  number of variables and $M$ is its number of monomials,
  we obtain an algorithm of time complexity in $O(n^{c \log(n)})$ (with
  $c > 0$)  that determines whether $p$ vanishes whenever all its arguments
  are chosen from some given $S \subseteq \ob{F} \setminus \{0\}$.

  We note that Corollary~\ref{cor:testset} does not require
  any degree bounds on $p$. Degree bounds could significantly
  restrict the scope of the theorem.
  For example, the polynomial
  $p = \prod_{i=1}^N X_i^2$ over $\ob{F}_4 = \{0,1,\omega,\omega^2\}$ contains only one monomial,
  but for $S = \{\omega, \omega^2\}$, the only $p'$ that agrees
  with $p$ on $S^N$ and satisfies $\deg_{X_i} (p') < |S|$ for all $i \in \ul{N}$
  is $p' = \prod_{i = 1}^N (X_i + 1)$, which contains $2^N$ monomials.

  \section{Polynomials over fields with bounded degree}
  For polynomials over fields, we can sometimes improve the
  bound given in Theorem~\ref{thm:coeffs}. The next theorem, however,
  requires bounds on the degree of $f$.
  \begin{thm} \label{thm:redcoeffs}
    Let $N \in \N$, let $\ob{K}$ be a field, let
    $A_1, \ldots, A_N$ be finite subsets of $\ob{K}$,
    and let
    $\vb{a} = (a_1, \ldots, a_N) \in \bigtimes_{i=1}^N (A_i \setminus \{0\})$.
    Let $f \in \ob{K}[X_1, \ldots, X_N]$ be a polynomial
    such that $\deg_{X_i} (f) < |A_i|$ for all $i \in \ul{N}$, and
    let $Q := \bigtimes_{i=1}^N A_i$. We assume that
    $f$ is absorbing at $\vb{a}$ for $Q$, and that 
    there is $\vb{b} \in Q$ with $f (\vb{b}) \neq 0$.
    Then $M(f) \ge 2^N$.
  \end{thm}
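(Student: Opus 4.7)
The plan is to deduce the theorem in two steps. First I would show that the hypotheses force $f$ to be divisible by $\prod_{i=1}^N (X_i - a_i)$. Then I would prove a separate lemma: any nonzero multiple of this product contains at least $2^N$ monomials.

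For the divisibility, I fix $j \in \ul{N}$ and consider the specialization $f_j := f(X_1, \ldots, X_{j-1}, a_j, X_{j+1}, \ldots, X_N)$, which is a polynomial in the variables other than $X_j$ with $\deg_{X_i}(f_j) < |A_i|$ for $i \neq j$. The absorbing hypothesis forces $f_j$ to vanish on $\bigtimes_{i \neq j} A_i$, and the well-known uniqueness of polynomial representation on a rectangular grid of strictly larger size per variable (proved by induction on the number of variables, using that a univariate polynomial of degree less than $|A|$ vanishing on $A$ must be zero) then yields $f_j = 0$ as a polynomial. Hence $(X_j - a_j) \mid f$ in $\ob{K}[X_1, \ldots, X_N]$; since $\ob{K}$ is a field and the $X_j - a_j$ are pairwise coprime, this gives $f = \prod_{j=1}^N (X_j - a_j) \cdot g$ for some $g \in \ob{K}[X_1, \ldots, X_N]$, and $g \neq 0$ because $f(\vb{b}) \neq 0$.

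The heart of the proof is the following inductive claim: if $a_1, \ldots, a_N \in \ob{K} \setminus \{0\}$ and $g \in \ob{K}[X_1, \ldots, X_N]$ is nonzero, then $M\bigl(\prod_{i=1}^N (X_i - a_i) \cdot g\bigr) \geq 2^N$. The base case $N = 0$ is trivial. For the induction step I write $g = \sum_{k=0}^{d} g_k(X_1, \ldots, X_{N-1}) \, X_N^k$ with $g_d \neq 0$, set $k_0 := \min\{k : g_k \neq 0\}$, and let $h_k := \prod_{i < N}(X_i - a_i) \cdot g_k$. Expanding as a polynomial in $X_N$,
\[
\prod_{i=1}^N (X_i - a_i) \cdot g \;=\; \sum_{k=0}^{d+1} (h_{k-1} - a_N h_k) \, X_N^k
\]
(with the convention $h_{-1} = h_{d+1} = 0$). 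The coefficient of $X_N^{d+1}$ is $h_d$ and the coefficient of $X_N^{k_0}$ is $-a_N h_{k_0}$; the induction hypothesis applied to $g_d$ and $g_{k_0}$ gives $M(h_d) \geq 2^{N-1}$ and $M(-a_N h_{k_0}) = M(h_{k_0}) \geq 2^{N-1}$. Since $k_0 \leq d < d+1$, these two contributions sit at distinct $X_N$-degrees and therefore produce disjoint sets of monomials, yielding the required $2 \cdot 2^{N-1} = 2^N$.

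The main obstacle I anticipate is the inductive argument itself: the middle coefficients $h_{k-1} - a_N h_k$ for $0 < k \leq d$ can suffer cancellation, so the induction hypothesis is not directly usable on them. The trick is to apply it only at the two \emph{extreme} slices in $X_N$, namely $k = d+1$ and $k = k_0$, whose coefficients are automatically free from cross-cancellation because the relevant neighbouring $h$ vanishes; this is just enough to double the monomial count from $2^{N-1}$ to $2^N$.
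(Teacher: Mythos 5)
Your argument is correct, and it takes a genuinely different route from the paper. The paper derives from the absorbing hypothesis and the degree bounds the same key fact you use --- that each specialization $f(X_1,\ldots,X_{j-1},a_j,X_{j+1},\ldots,X_N)$ is the zero polynomial (it establishes this by observing that the specialization lies in the vanishing ideal of $\bigtimes_{i\neq j}A_i$ and is already reduced with respect to a Gr\"obner basis of that ideal) --- but then it stays entirely on the level of the coefficient array: the vanishing of each coefficient of the specialization gives a linear relation $\sum_k c(e_1,\ldots,e_{j-1},k,e_{j+1},\ldots,e_N)\,a_j^k=0$, which, since $a_j\neq 0$, shows that the support $E$ of $f$ has the property that every point of $E$ can be moved in every coordinate direction to another point of $E$; a separate combinatorial lemma then forces $|E|\ge 2^N$. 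You instead convert the vanishing of the specializations into divisibility, obtaining the structural statement $f=\prod_{j=1}^N(X_j-a_j)\cdot g$ with $g\neq 0$ (valid since $\ob{K}[X_1,\ldots,X_N]$ is a UFD and the $X_j-a_j$ are pairwise non-associate irreducibles), and then prove directly by induction on $N$ that any nonzero multiple of $\prod_j(X_j-a_j)$ with all $a_j\neq 0$ has at least $2^N$ monomials; your trick of applying the induction hypothesis only to the two extreme $X_N$-slices $k=d+1$ and $k=k_0$, where no cross-cancellation can occur, is exactly what is needed and is sound (note that $a_N\neq 0$ is essential there, and it is available). Your route yields a slightly stronger conclusion (the explicit factorization, and a standalone monomial-count lemma for products $\prod(X_j-a_j)\cdot g$ that echoes the paper's later Theorem~\ref{thm:coeffs2}), while the paper's combinatorial lemma about supports is more flexible in that it only needs the ``movability'' of exponent vectors rather than a clean factorization.
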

   The proof uses the following lemma.
    \begin{lem} \label{lem:comb}
    Let $N \in \N$,  $A_1, \ldots, A_N$ be nonempty sets,
    and let
    $E \subseteq A_1 \times \cdots \times A_N$ be such that
    \begin{multline} \label{eq:second}
    \text{for all }(e_1, \ldots, e_N) \in E \text{ and for all }
    j \in \ul{N}, \\
     \text{ there is } e_j' \in A_j \text{  with }
      e_j' \neq e_j \text{ and }
      (e_1, \ldots, e_{j-1}, e_j', e_{j+1}, \ldots, e_N) \in E.
    \end{multline}  
    Then $|E| \ge 2^N$.
  \end{lem}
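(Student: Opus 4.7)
The plan is to prove the lemma by induction on $N$, slicing $E$ along the last coordinate and using condition~\eqref{eq:second} at that coordinate to produce two disjoint substructures in dimension $N-1$. I note first that the hypothesis implicitly requires $E \neq \emptyset$, since otherwise $|E| = 0 < 2^N$ would defeat the claim; I assume $E \neq \emptyset$ throughout.

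The base case $N = 1$ is immediate: picking any $e \in E$ and applying~\eqref{eq:second} at $j = 1$ produces $e' \in E$ with $e' \neq e$, so $|E| \ge 2$.

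For the inductive step from $N$ to $N+1$, I pick some $(e_1, \ldots, e_{N+1}) \in E$ and invoke~\eqref{eq:second} in coordinate $N+1$ to obtain $e_{N+1}' \neq e_{N+1}$ with $(e_1, \ldots, e_N, e_{N+1}') \in E$. For each $a \in A_{N+1}$ I consider the slice
\[
E_a := \{ (x_1, \ldots, x_N) \in A_1 \times \cdots \times A_N \mid (x_1, \ldots, x_N, a) \in E \}.
\]
Both $E_{e_{N+1}}$ and $E_{e_{N+1}'}$ contain $(e_1, \ldots, e_N)$ and are thus nonempty. Granted that each slice inherits property~\eqref{eq:second} in dimension $N$, the induction hypothesis yields $|E_{e_{N+1}}|, |E_{e_{N+1}'}| \ge 2^N$. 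Since tuples in $E$ arising from the two slices carry distinct last coordinates, these contributions are disjoint, and hence $|E| \ge 2 \cdot 2^N = 2^{N+1}$, closing the induction.

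The only verification requiring thought — and the main (though mild) obstacle — is confirming that each slice $E_a$ itself satisfies~\eqref{eq:second}. This descends automatically: for any $(x_1, \ldots, x_N) \in E_a$ and any $j \in \ul{N}$, applying~\eqref{eq:second} to $(x_1, \ldots, x_N, a) \in E$ in coordinate $j$ produces $x_j' \neq x_j$ in $A_j$ with $(x_1, \ldots, x_{j-1}, x_j', x_{j+1}, \ldots, x_N, a) \in E$; since the modification leaves the last coordinate untouched, the new tuple still lies in $E_a$. With this inheritance in hand the recursive doubling works cleanly, and no finer combinatorial tool is needed.
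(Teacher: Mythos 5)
Your proof is correct and follows essentially the same route as the paper's: induction on $N$, a check that property~\eqref{eq:second} is inherited by an $(N-1)$-dimensional substructure, and a factor of $2$ extracted from the last coordinate --- the only cosmetic difference being that the paper projects $E$ onto its first $N-1$ coordinates and observes that every fiber of the projection has at least two elements, whereas you fix two values of the last coordinate and add the sizes of two nonempty slices, each bounded below by the induction hypothesis. Your explicit remark that the statement tacitly presupposes $E \neq \emptyset$ is well taken; the paper's proof makes the same unstated assumption, which is harmless in the one place the lemma is applied.
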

  \begin{proof}
    We proceed by induction on $N$.
    In the case $N = 1$, the assumptions guarantee that
    $A_1$ is nonempty and contains at least $2$ elements.

    For the induction step, we let $N \ge 2$ and
    assume that the result holds
    for all collections of $N-1$ sets. We assume that
    $E \subseteq A_1 \times \cdots \times  A_N$ satisfies~\eqref{eq:second}.
    We define
    \[
    E' := \{ (e_1, \ldots, e_{N-1}) \mid
    \exists e \in A_N \,:\, (e_1, \ldots, e_{N-1}, e) \in E \}
    \]
    as the projection of $E$ to its first $N-1$ components.
    Next we show that $E'$ satisfies the assumptions made
    in~\eqref{eq:second}. For this purpose, we pick
    $(e_1, \ldots, e_{N-1}) \in E'$ and $j \in \ul{N-1}$.
    There is $e \in A_N$ with
    $(e_1, \ldots, e_{N-1}, e) \in E$. Therefore, since
    $E$ satisfies~\eqref{eq:second}, there is $e_j' \in A_j$ with
    $(e_1, \ldots, e_{j-1}, e_j', e_{j+1}, \ldots, e_{N-1}, e) \in E$
    and $e_j' \neq e_j$. Hence
    $(e_1, \ldots, e_{j-1}, e_j', e_{j+1}, \ldots, e_{N-1}) \in E'$,
    which completes the proof that $E'$  satisfies the assumptions
    made in~\eqref{eq:second}.
    Therefore by the induction hypothesis,
    we have $|E'| \ge 2^{N-1}$.
    Now consider the projection $\pi:E \to E', (e_1, \ldots, e_N)
    \mapsto (e_1, \ldots, e_{N-1})$. Then the assumptions on $E$
    guarantee that for every $(e_1, \ldots, e_{N-1})$
    the pre-image $\pi^{-1} (\{ (e_1, \ldots, e_{N-1}) \} )$
    contains at least two different elements from $E$.
    Hence $|E| \ge 2^{N-1} \cdot 2 = 2^N$, completing the induction proof.
 \end{proof}

\begin{proof}[Proof of Theorem~\ref{thm:redcoeffs}]
    For $i \in \ul{N}$, let $r_i := |A_i|$,
    and let $D := \bigtimes_{i=1}^N \{0,1, \ldots, r_i - 1\}$.
    Suppose that 
        \[
        f = \sum_{\vb{d} \in D} c(d_1, \ldots, d_N) X_1^{d_1} \cdots X_N^{d_N}.
        \]
        We have to show that $E := c^{-1} (\ob{K} \setminus \{0\})$
        has at least $2^N$ elements. Intending to use Lemma~\ref{lem:comb},
        we let $(e_1, \ldots, e_N) \in E$ and 
        $j \in \ul{N}$. We know that
        \[
           g(Y_1, \ldots, Y_{j-1}, Y_{j+1}, \ldots, Y_{N}) := 
           f(Y_1,\ldots, Y_{j-1}, a_j, Y_{j+1}, \ldots, Y_{N})
        \]   
        is a polynomial in $\ob{K} [
          Y_1, \ldots, Y_{j-1}, Y_{j+1}, \ldots, Y_{N}
                                   ]$  
         that
        is $0$ on $Q' := \prod_{i \in \ul{N} \setminus \{j\}} A_i$.
        Hence $g$ lies in the vanishing ideal of $Q'$, which
        is generated by
        $B = \{ \prod_{\alpha \in A_i} (Y_i - \alpha) \setsuchthat
        i \in \ul{N} \setminus \{j\}  \}$ (cf. \cite[Theorem~1.1]{Al:CN}). Since the leading monomials of
        the polynomials in $B$ are coprime, $B$ is a Gr\"obner basis
        of $Q'$ (with respect to every variable ordering, cf.
        \cite[p.89, Exercise 11]{CLO:IVAA4}),
        and by the degree bounds on $f$, we obtain that
        $g$ is
        in reduced form with respect to $B$, and therefore
        (by \cite[Theorem~1.6.2]{AL:AITG} or \cite[p.84, Corollary~2]{CLO:IVAA4})
        $g = 0$. Hence the coefficient of  $Y_1^{e_1} \cdots Y_{j-1}^{e_{j-1}}
        Y_{j+1}^{e_{j+1}} \cdots Y_{N}^{e_N}$ in $g$ is $0$, which
        implies
            \begin{equation} \label{eq:sum0}
         \sum_{k = 0}^{\deg_{X_j} (f)} c(e_1, \ldots, e_{j-1}, k, e_{j+1},
                                   \ldots, e_N) \, a_j^k = 0.
        \end{equation}
        Since $(e_1,\ldots, e_N) \in E$ and $a_j \neq 0$,
        we have $c(e_1, \ldots, e_{j-1}, e_j, e_{j+1}, \ldots, e_N)\, a_j^{e_j} \neq 0$.    Thus by~\eqref{eq:sum0},
        there is $e_j' \in \N_0$ with
        $e_j' \le \deg_{X_j} (f)$
        such that $e_j' \neq e_j$
        and
        $c(e_1, \ldots, e_{j-1}, e_j', e_{j+1}, \ldots, e_N)\, a_j^{e_j'} \neq 0$.
        Then $(e_1, \ldots, e_{j-1}, e_j', e_{j+1}, \ldots, e_N)$
        is an element of $E$.
        Now Lemma~\ref{lem:comb} yields $|E| \ge 2^N$.
  \end{proof}
As a consequence, the spheres containing nonzeros can sometimes be
chosen smaller than in Theorem~\ref{thm:nonzeroClose}.

    \begin{thm} \label{thm:rednonzeroClose}
    Let $N \in \N$,
    let $\ob{K}$ be a field, let $A_1, \ldots, A_N$ be
    finite subsets of $\ob{K}$,
    let $Q := \bigtimes_{i=1}^N A_i$,
    and let $p \in \ob{K}[X_1, \ldots, X_N]$ 
    be a polynomial with
    $\deg_{X_i} (p) < |A_i|$ for all $i \in \ul{N}$.
    We assume that there is $\vb{c} \in Q$ such that
    $p (\vb{c}) \neq 0$.
    Then for every
    $\vb{a} \in \bigtimes_{i=1}^N (A_i \setminus \{0\})$,
    there is $\vb{b} \in Q$
    with $p(\vb{b}) \neq 0$ and
    \(
    d_H (\vb{a}, \vb{b}) \le \log_2 (M(p)).
    \)
   \end{thm}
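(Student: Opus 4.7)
The plan is to mimic the proof of Theorem~\ref{thm:nonzeroClose}, but to substitute Theorem~\ref{thm:redcoeffs} for Theorem~\ref{thm:coeffs} at the key step. First I would pick $\vb{b} \in Q$ with $p(\vb{b}) \ne 0$ such that $k := d_H(\vb{a},\vb{b})$ is minimal. If $k=0$ the inequality is trivial, so assume $k \ge 1$. Let $i_1 < i_2 < \cdots < i_k$ be the indices where $\vb{a}$ and $\vb{b}$ differ, and define the polynomial $f \in \ob{K}[X_1, \ldots, X_k]$ by substituting $a_j$ for every variable $X_j$ with $j \notin \{i_1,\ldots,i_k\}$, keeping $X_1, \ldots, X_k$ as the variables at positions $i_1, \ldots, i_k$ respectively.

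Next I would verify that $f$ satisfies the hypotheses of Theorem~\ref{thm:redcoeffs} on $Q_1 := \bigtimes_{m=1}^k A_{i_m}$ with the distinguished point $\vb{a}' := (a_{i_1}, \ldots, a_{i_k})$. The degree bound $\deg_{X_m}(f) \le \deg_{X_{i_m}}(p) < |A_{i_m}|$ is inherited from $p$, and $\vb{a}' \in \bigtimes_{m=1}^k (A_{i_m} \setminus \{0\})$ because $\vb{a} \in \bigtimes_{i=1}^N (A_i \setminus \{0\})$. To check that $f$ is absorbing at $\vb{a}'$ for $Q_1$, I take any $\vb{x} \in Q_1$ with $x_m = a_{i_m}$ for some $m \in \ul{k}$ and let $\tilde{\vb{x}} \in Q$ be the corresponding tuple (with $\tilde{x}_{i_l} = x_l$ and $\tilde{x}_j = a_j$ for $j \notin \{i_1,\ldots,i_k\}$). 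Since at least one of the $k$ coordinates $i_1, \ldots, i_k$ now agrees with $\vb{a}$, we have $d_H(\vb{a}, \tilde{\vb{x}}) \le k-1 < k$, so by minimality of $k$, $p(\tilde{\vb{x}}) = 0$, i.e., $f(\vb{x}) = 0$. Finally, since $b_j = a_j$ for every $j \notin \{i_1, \ldots, i_k\}$, evaluating $f$ at $(b_{i_1}, \ldots, b_{i_k})$ yields $p(\vb{b}) \neq 0$.

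Now Theorem~\ref{thm:redcoeffs} applies to $f$ and gives $M(f) \ge 2^k$. Since $f$ is obtained from $p$ by substituting constants for some variables, no monomial of $f$ arises that was not a contraction of monomials of $p$, and therefore $M(f) \le M(p)$. Combining these bounds, $2^k \le M(p)$, so $k \le \log_2(M(p))$, which is the claim.

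The only delicate point is verifying the absorbing property of $f$: one must be careful that the minimality of $d_H(\vb{a},\vb{b})$ truly forces $p$ to vanish on every tuple $\tilde{\vb{x}}$ constructed above, but this follows immediately once one notices that matching $\vb{a}$ in even one of the ``differing'' coordinates strictly decreases the Hamming distance from $\vb{a}$ below $k$. Everything else is bookkeeping parallel to the proof of Theorem~\ref{thm:nonzeroClose}.
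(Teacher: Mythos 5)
Your proposal is correct and follows essentially the same route as the paper: the paper's proof likewise repeats the minimal-distance setup from Theorem~\ref{thm:nonzeroClose}, observes that the restricted polynomial $f$ is absorbing at $(a_{i_1},\ldots,a_{i_k})$ for $\bigtimes_{l=1}^k A_{i_l}$, and invokes Theorem~\ref{thm:redcoeffs} to get $M(p)\ge M(f)\ge 2^k$. Your write-up merely spells out the verification of the hypotheses (degree bounds, $a_{i_m}\neq 0$, and the absorbing property via minimality) that the paper leaves implicit.
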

 \begin{proof}
   We proceed as in the proof of Theorem~\ref{thm:nonzeroClose}
   and observe that the polynomial
    \begin{multline*} \label{eq:fp}
     f (X_1,\ldots, X_k) :=
     p (a_1, \ldots, a_{i_1 - 1}, X_1, a_{i_1 + 1}, \ldots,
           a_{i_2 - 1}, X_2, a_{i_2 + 1}, \ldots, \\
       \ldots, a_{i_k - 1}, X_k, a_{i_k + 1}, \ldots, a_N)
    \end{multline*}
    is absorbing for $Q' := \bigtimes_{l=1}^k A_{i_l}$
    at $(a_{i_1}, \ldots, a_{i_k})$.
    Then Theorem~\ref{thm:redcoeffs} yields
    that $f$ has at least $2^k$ monomials. Therefore,
    also $p$ has at least $2^k$ monomials, which
    means $M(p) \ge 2^k$, and therefore
    $d_H (\vb{a}, \vb{b}) = k \le \log_2 (M(p))$. 
 \end{proof}

  Theorem~\ref{thm:rednonzeroClose}
 allows to  formulate \cite[Corollary~2.6]{CD:OZTA}
    in a more general setting.
 \begin{cor}
   Let $M, N \in \N$, let $\ob{K}$ be a field, let $a \in \ob{K} \setminus \{0\}$, let
   $f \in \ob{K}[X_1, \ldots, X_N]$ be a polynomial with
   at most $M$ monomials that satisfies
   $\deg_{X_i} (f) < 2$ for all
   $i \in \ul{N}$, and let $Q := \{a, 0\}^N$.
   If $f$ vanishes on all $\vb{z} \in Q$ with 
   at most $\log_2 (M)$ zero entries, then $f$ vanishes on
   all of $Q$.
 \end{cor}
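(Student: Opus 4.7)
The plan is to derive the corollary as a direct instance of Theorem~\ref{thm:rednonzeroClose}, arguing by contrapositive. I set $A_i := \{a, 0\}$ for each $i \in \ul{N}$, so that $Q = \bigtimes_{i=1}^N A_i$ in the sense of that theorem, and the hypothesis $\deg_{X_i}(f) < 2 = |A_i|$ is exactly what the theorem demands. Since $a \neq 0$, the set $A_i \setminus \{0\}$ equals $\{a\}$ for every $i$, so the only admissible ``center'' in the theorem is the all-$a$ tuple $(a, \ldots, a)$.

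Assuming, toward contradiction, that $f$ does not vanish on all of $Q$, there is some $\vb{c} \in Q$ with $f(\vb{c}) \neq 0$. Applying Theorem~\ref{thm:rednonzeroClose} with center $(a, \ldots, a)$ yields a $\vb{b} \in Q$ with $f(\vb{b}) \neq 0$ and $d_H((a, \ldots, a), \vb{b}) \le \log_2(M(f)) \le \log_2(M)$. The only real translation step is the observation that for any $\vb{b} \in \{a, 0\}^N$, the Hamming distance from $(a, \ldots, a)$ equals precisely the number of zero coordinates of $\vb{b}$. Therefore $\vb{b}$ is a point of $Q$ with at most $\log_2(M)$ zero entries on which $f$ is nonzero, contradicting the hypothesis and completing the proof.

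There is no substantive obstacle here: the content of the corollary is the reinterpretation of ``Hamming distance to the all-$a$ tuple'' as ``number of zero entries'', combined with the observation that the hypotheses on $f$ and $Q$ are exactly what Theorem~\ref{thm:rednonzeroClose} requires when $|A_i|=2$. The hypothesis $a \neq 0$ is essential, since it is what allows $(a, \ldots, a)$ to be a legitimate center in the theorem.
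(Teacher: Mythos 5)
Your proof is correct and follows exactly the paper's own argument: apply Theorem~\ref{thm:rednonzeroClose} with $A_i = \{a,0\}$ and center $(a,\ldots,a)$, then observe that the Hamming distance to the all-$a$ tuple is the number of zero entries. Your explicit verification of the hypotheses ($\deg_{X_i}(f) < 2 = |A_i|$ and $a \neq 0$) is a slightly more careful write-up of the same reasoning.
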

 \begin{proof}
   Suppose that $f$ does not vanish on all of $Q$. Then
   by Theorem~\ref{thm:rednonzeroClose}, there is a $\vb{z} \in Q$ with $f(\vb{z}) \neq 0$ and
   $d_H (\vb{z}, (a,a,\ldots, a)) \le \log_2 (M)$.
   Hence $\vb{z}$ has at most $\log_2 (M)$ zero entries.
 \end{proof}  
   
 \section{Polynomials over fields on rectangular domains containing
   $0$}
 In this Section, we consider another case in which one can
 guarantee the existence of many monomials;
 in this case,
 the domain of the considered
 polynomial
 function is of the form $S = \{a, 0\}^N$ or $S = \bigtimes_{i=1}^N \{a_i,0\}$,
 where $a, a_1, \ldots, a_N$ are nonzero field
 elements, and in contrast to Theorem~\ref{thm:redcoeffs}, no
 degree bounds are required.
\begin{thm} \label{thm:coeffs2}
  Let $N \in \N$, let $\ob{K}$ be a field,
  let $f \in \ob{K} [X_1, \ldots, X_N]$, and
  let $a_1, \ldots, a_N \in \ob{K} \setminus \{0\}$.
  Let $Q := \bigtimes_{i=1}^N \{a_i, 0\}$. We suppose that
  $f$ is absorbing at $(a_1,\ldots, a_N)$ for $Q$, and that
  $f(0,\ldots, 0) \neq 0$. 
  Then $M(f) \ge 2^N$.
\end{thm}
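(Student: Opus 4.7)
The plan is to reduce the claim to Theorem~\ref{thm:redcoeffs} by first replacing $f$ with a multilinear polynomial that agrees with it on~$Q$. Set $A_i := \{0, a_i\}$, so that $Q = \bigtimes_{i=1}^N A_i$ and the vanishing ideal of $Q$ is generated by the polynomials $X_i^2 - a_i X_i = X_i(X_i - a_i)$ for $i \in \ul{N}$. Since the leading monomials $X_i^2$ are pairwise coprime, these generators form a Gr\"obner basis, and dividing $f$ by them yields a unique remainder $f' \in \ob{K}[X_1,\ldots,X_N]$ with $\deg_{X_i}(f') < 2$ for every $i$ and $f'(\vb{x}) = f(\vb{x})$ for all $\vb{x} \in Q$. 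In particular $f'$ is absorbing at $(a_1,\ldots,a_N)$ for $Q$ and $f'(0,\ldots,0) = f(0,\ldots,0) \neq 0$.

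The heart of the argument is the monomial-count inequality $M(f') \le M(f)$. Concretely, the reduction replaces each monomial $X_1^{e_1}\cdots X_N^{e_N}$ by the scalar multiple
\[
\Bigl(\prod_{i\,:\, e_i \ge 1} a_i^{e_i - 1}\Bigr) X_1^{\epsilon_1}\cdots X_N^{\epsilon_N},
\]
where $\epsilon_i := \min(e_i, 1)$. Thus the set of monomials appearing in $f'$ is contained in the image of the monomials of $f$ under $\vb{e} \mapsto (\epsilon_1,\ldots,\epsilon_N)$; different monomials of $f$ whose images coincide may combine into a single coefficient (possibly zero), but in every case the number of monomials cannot grow. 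This gives $M(f') \le M(f)$.

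Now Theorem~\ref{thm:redcoeffs} applies to $f'$ with $A_i = \{0, a_i\}$, $\vb{a} = (a_1,\ldots,a_N) \in \bigtimes_{i=1}^N (A_i \setminus\{0\})$, and $\vb{b} = (0,\ldots,0) \in Q$ as the witness with $f'(\vb{b}) \neq 0$: the degree bounds $\deg_{X_i}(f') < |A_i| = 2$ are exactly met. The theorem yields $M(f') \ge 2^N$, hence $M(f) \ge M(f') \ge 2^N$.

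The only step requiring any care is the monomial-count bound $M(f') \le M(f)$; everything else is essentially a reformulation of the hypotheses. There is no genuine obstacle, since the reduction with respect to a Gr\"obner basis of the simple form $\{X_i^2 - a_i X_i\}$ acts on each monomial independently and merely projects its exponent vector onto $\{0,1\}^N$, rescaling by a nonzero constant.
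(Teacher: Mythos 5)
Your proof is correct, but it takes a genuinely different route from the paper's. You reduce $f$ modulo the Gr\"obner basis $\{X_i^2 - a_i X_i \mid i \in \ul{N}\}$ to a multilinear $f'$ that agrees with $f$ on $Q$, observe that the normal-form map is $\ob{K}$-linear and sends each monomial $X_1^{e_1}\cdots X_N^{e_N}$ to the single monomial $\bigl(\prod_{i:\,e_i\ge 1} a_i^{e_i-1}\bigr)\prod_{i:\,e_i\ge 1} X_i$, so that $M(f')\le M(f)$, and then invoke Theorem~\ref{thm:redcoeffs} (whose degree hypothesis $\deg_{X_i}(f')<|A_i|=2$ is now satisfied, with $\vb{b}=(0,\ldots,0)$ as the nonzero) to get $M(f')\ge 2^N$. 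The paper instead stays self-contained: it computes the normal form explicitly as a nonzero scalar multiple of $\prod_{i=1}^N(X_i-a_i)$, which contains a monomial of every support $I\subseteq\ul{N}$, and then tracks the individual division steps $r_{k+1}=r_k-cm\,(X_j^2-a_jX_j)$ to show that every support occurring among the monomials of $r_{k+1}$ already occurs among those of $r_k$; hence $f$ itself contains a monomial of every support. The paper's argument thus yields a slightly stronger structural conclusion (all $2^N$ supports are realized in $f$) and is independent of Theorem~\ref{thm:redcoeffs}, whereas yours is shorter and replaces the support bookkeeping by the cleaner observation that a linear map sending monomials to single monomials cannot increase the monomial count. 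The one point in your write-up worth making explicit is the linearity of the normal form (it follows from the uniqueness of the remainder modulo a Gr\"obner basis), since that is what licenses your monomial-by-monomial description of the reduction.
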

\begin{proof}
  Let $I(Q)$ be the ideal of $\ob{K}[X_1,\ldots,X_N]$ that
  consists of all polynomials that vanish on all of $Q$.
  By \cite{Al:CN}, this ideal is generated by
  $G = \{X_i^2 - a_i X_i \mid i \in \ul{N}\}$. 
  By the assumptions, $f$ is congruent to
  \[
  h = \frac{f(0,\ldots,0)}{(-1)^N \prod_{i=1}^N a_i} \prod_{i=1}^N (X_i - a_i) 
     \]
     modulo $I(Q)$. It is easy to see that for each subset
     $I$ of $\ul{N}$, $h$ contains the monomial
     $\prod_{i \in I} X_i$. We define the \emph{support} of a monomial
     $X_1^{e_1} \cdots X_N^{e_N}$ as
     $\{ i \in \ul{N} \mid e_i \neq 0 \}$.
     The remainder of $f$ modulo $G$ is $h$, and it can
     be computed as the last element $r_n$ of a finite sequence
     $(r_k)_{k \in \ul{n}}$ of polynomials such that $r_1 = f$
     and for each $k \in \ul{n-1}$, there exists a $j \in \ul{N}$, 
     a monomial $m$ and a $c \in \ob{K}\setminus\{0\}$ such that 
     \[
     r_{k+1} = r_k - cm(X_1, \ldots, X_N) (X_j^2 - a_j X_j)
     \]
     and the monomial $m(X_1, \ldots, X_N) X_j^2$ appears in $r_{k}$ with coefficient $c$.
     (This holds because $G$ is Gr\"obner basis since all
     leading terms are coprime \cite[p.89, Exercise~11]{CLO:IVAA4}, and the remainder modulo
     a Gr\"obner basis is unique \cite[p.83, Proposition~1]{CLO:IVAA4}.)
     We will now show that for each subset $I$ of $\ul{N}$ such
     that $r_{k+1}$ contains a monomial with support $I$,
     then so does $r_k$.
     We have
     \[
       r_k = r_{k+1} + cm(X_1, \ldots, X_N) X_j^2
       - ca_j m (X_1, \ldots, X_N) X_j.
     \]  
     Let $J$ be the support of $m$ and let $J_1 := J \cup \{j\}$.
     If $I \neq J_1$, then since the support
     of both $m(X_1, \ldots, X_N) X_j^2$
     and $m (X_1, \ldots, X_N) X_j$ is $J_1$, and hence not
     $I$,
     $r_k$ contains the same monomial
     with support $I$ that is contained in $r_{k+1}$.
     If $I = J_1$ then 
     $m(X_1, \ldots, X_N) X_j^2$ is a monomial in $r_{k}$ with support $I$.
     
     From this we see that for each $I \subseteq \ul{N}$,
     $f$ contains a monomial with support $I$, and must therefore
     contain at least $2^N$ monomials.
   \end{proof}    

Given a point $\vb{a} \in (\ob{K}\setminus \{0\})^N$,
we can therefore find a nonzero of $f$ in the proximity of $\vb{a}$,
provided that $f (0,\ldots,0) \neq 0$.
\begin{cor} \label{cor:close0}
  Let $\ob{K}$ be a field, let
  $N \in \N$, let $f \in \ob{K} [X_1, \ldots, X_N] \setminus \{0\}$,
  let
  $\vb{a} = (a_1, \ldots, a_N) \in (\ob{K} \setminus \{0\})^N$,
  and let $Q := \bigtimes_{i=1}^N \{a_i, 0\}$.
  Suppose that $f(0,\ldots, 0) \neq 0$. Then there is
  $\vb{b} \in Q$ with $f(\vb{b}) \neq 0$ and
  $d_H (\vb{a}, \vb{b}) \le \log_2 (M (f))$.
\end{cor}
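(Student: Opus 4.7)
The plan is to mimic the proof of Theorem~\ref{thm:rednonzeroClose}, but with Theorem~\ref{thm:coeffs2} in place of Theorem~\ref{thm:redcoeffs}. The hypothesis $f(0,\ldots,0)\neq 0$ guarantees that the set $\{\vb{z}\in Q\mid f(\vb{z})\neq 0\}$ is nonempty, since $(0,\ldots,0)\in Q$. I would therefore start by choosing $\vb{b}\in Q$ with $f(\vb{b})\neq 0$ for which $k := d_H(\vb{a},\vb{b})$ is minimal. If $k=0$ the conclusion is trivial, so assume $k\geq 1$.

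Let $i_1<\cdots<i_k$ be the indices at which $\vb{a}$ and $\vb{b}$ differ; since $\vb{b}\in Q$ and $b_{i_m}\neq a_{i_m}$, we have $b_{i_m}=0$ for $m\in\ul{k}$. I would then substitute $a_j$ for $X_j$ whenever $j\notin\{i_1,\ldots,i_k\}$, obtaining a polynomial
\[
g(X_1,\ldots,X_k) := f(a_1,\ldots,a_{i_1-1},X_1,a_{i_1+1},\ldots,a_{i_k-1},X_k,a_{i_k+1},\ldots,a_N)
\]
in $\ob{K}[X_1,\ldots,X_k]$. Let $Q' := \bigtimes_{m=1}^k \{a_{i_m},0\}$.

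The minimality of $k$ forces $g$ to be absorbing at $(a_{i_1},\ldots,a_{i_k})$ for $Q'$: indeed, any point of $Q'$ having some coordinate equal to $a_{i_m}$ lifts to a point of $Q$ strictly closer to $\vb{a}$ than $\vb{b}$, hence $f$ and thus $g$ must vanish there. Moreover $g(0,\ldots,0)=f(\vb{b})\neq 0$.

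Theorem~\ref{thm:coeffs2} now applies to $g$ on $Q'$ and yields $M(g)\geq 2^k$. Since each monomial of $g$ arises from a monomial of $f$ (by restricting the indicated variables), we have $M(f)\geq M(g)\geq 2^k$, whence $k\leq \log_2 M(f)$. The only point that requires care is the verification of the absorbing condition from the minimality of $d_H(\vb{a},\vb{b})$, but this is entirely parallel to the corresponding step in the proof of Theorem~\ref{thm:nonzeroClose}, so no genuine obstacle is anticipated.
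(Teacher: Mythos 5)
Your proof is correct and follows essentially the same route as the paper: the paper's own (very terse) argument also picks $\vb{b}\in Q$ with $f(\vb{b})\neq 0$ at minimal Hamming distance from $\vb{a}$ and, exactly as in the proof of Theorem~\ref{thm:nonzeroClose}, restricts $f$ to the $k$ coordinates where $\vb{a}$ and $\vb{b}$ differ before invoking Theorem~\ref{thm:coeffs2}. You have merely written out the substitution, the verification of the absorbing property, and the inequality $M(g)\le M(f)$ that the paper leaves implicit.
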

\begin{proof}
  As in the proof of Theorem~\ref{thm:nonzeroClose},
  we choose $\vb{b}$ to be a nonzero of $f$ with
  minimal distance to $\vb{a}$. Setting
  $k := d_H (\vb{a}, \vb{b})$, we obtain from
  Theorem~\ref{thm:coeffs2} that $M (f) \ge 2^k$ and thus
  $k \le \log_2 (M (f))$.
\end{proof}
We note that in the setting of Corollary~\ref{cor:close0},
$d_H (\vb{a}, \vb{b}) \le \log_2 (M (f))$ means 
that $\vb{b}$ has at most $\log_2 (M (f))$ zero entries.
For $J \subseteq \ul{N}$ and $f = \prod_{j \in J} (X_j - a_j)$,
the bound is attained: $M(f) =  2^{|J|}$,
and every nonzero $\vb{b}$ of $f$
in $\bigtimes_{i=1}^N \{a_i, 0\}$ must satisfy $b_j = 0$ for
all $j \in J$, and hence $\vb{b}$ contains at least
$|J| = \log_2 (M (f))$ zero entries.

\section{Applications to solving polynomial systems} \label{sec:solve}
In the field $\ob{F}_q$, the  solutions of
$f_1 = \cdots = f_r = 0$ are the nonzeros of
$\prod_{i=1}^r (1 - f_i^{q-1})$. Hence Theorem~\ref{thm:nonzeroClose}
also gives information on the solutions of polynomial systems.
\begin{thm} \label{thm:system}
  Let $\ob{F}$ be a finite field with $q>2$ elements,
  let $r, N \in \N$, let
  $f_1, \ldots, f_r \in \ob{F}[X_1, \ldots, X_N] \setminus \{0\}$,
  let $Q$ be a rectangular subset
  of $(\ob{F} \setminus \{0\})^N$, and let $t := \tfrac{q-1}{q-2}$.
  Let
  \[
    V := \{ \vb{x} \in Q \mid f_1 (\vb{x}) =
  \cdots = f_r (\vb{x}) = 0 \}.
  \]
  If $V \neq \emptyset$, then 
  for every $\vb{a} \in Q$, there is $\vb{b} \in V$
  with \[d_H (\vb{a}, \vb{b}) \le 
  \tfrac{1}{\log_2 (t)} \big(r + (q-1) \sum_{i=1}^r \log_2 (M(f_i))\big).
  \]
\end{thm}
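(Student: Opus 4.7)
The plan is to reduce Theorem~\ref{thm:system} to Theorem~\ref{thm:nonzeroClose} by collapsing the system into a single polynomial whose nonzeros on $Q$ are exactly $V$. Concretely, set
\[
g \,:=\, \prod_{i=1}^r \bigl(1 - f_i^{q-1}\bigr) \in \ob{F}[X_1, \ldots, X_N].
\]
Since $Q \subseteq (\ob{F}\setminus\{0\})^N$ and $a^{q-1} = 1$ for every nonzero $a \in \ob{F}$, for each $\vb{x} \in Q$ the factor $1 - f_i(\vb{x})^{q-1}$ equals $1$ if $f_i(\vb{x}) = 0$ and $0$ otherwise. Hence $g(\vb{x}) \ne 0$ if and only if $\vb{x} \in V$; in particular the hypothesis $V \ne \emptyset$ supplies a nonzero of $g$ on $Q$, so the nondegeneracy assumption of Theorem~\ref{thm:nonzeroClose} is met.

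The second step is to bound $M(g)$ by the usual multiplicative estimates. The expansion of $f_i^{q-1}$ produces at most one monomial per length-$(q-1)$ word in the monomials of $f_i$, so $M(f_i^{q-1}) \le M(f_i)^{q-1}$; therefore $M(1 - f_i^{q-1}) \le 1 + M(f_i)^{q-1} \le 2\,M(f_i)^{q-1}$. Since the monomial count of a product is at most the product of the monomial counts of the factors, we obtain $M(g) \le 2^r \prod_{i=1}^r M(f_i)^{q-1}$, and taking $\log_2$ yields
\[
\log_2 M(g) \;\le\; r + (q-1)\sum_{i=1}^r \log_2 M(f_i).
\]

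Finally I would apply Theorem~\ref{thm:nonzeroClose} to $g$ on $Q$; this is legal because $Q \subseteq \Tor(\ob{F})^N$, since every nonzero element of $\ob{F}$ has multiplicative order dividing $q-1$. Writing $r'$ for the parameter called $r$ in that theorem (to avoid clashing with the number of equations), we have $r' \le q-1$, so the base $t' := r'/(r'-1)$ satisfies $t' \ge (q-1)/(q-2) = t$ because $x \mapsto x/(x-1)$ is decreasing on $(1,\infty)$. Hence Theorem~\ref{thm:nonzeroClose} delivers $\vb{b} \in Q$ with $g(\vb{b}) \ne 0$, i.e., $\vb{b} \in V$, and
\[
d_H(\vb{a},\vb{b}) \;\le\; \log_{t'} M(g) \;\le\; \log_t M(g) \;=\; \frac{\log_2 M(g)}{\log_2 t},
\]
which together with the monomial bound of the second step gives the claimed inequality.

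The proof is essentially a single application of Theorem~\ref{thm:nonzeroClose} after the auxiliary construction of $g$, so I do not anticipate any substantial obstacle; the only things to get right are the direction of monotonicity $t' \ge t$ (and the consequent $\log_{t'} \le \log_t$ for arguments $\ge 1$) and the small bookkeeping needed to convert the multiplicative bound on $M(g)$ into the additive bound involving the $\log_2 M(f_i)$.
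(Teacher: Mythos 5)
Your proposal is correct and follows essentially the same route as the paper: form $g=\prod_{i=1}^r(1-f_i^{q-1})$, observe that its nonzeros on $Q$ are exactly $V$, apply Theorem~\ref{thm:nonzeroClose}, and bound $M(g)\le\prod_{i=1}^r(1+M(f_i)^{q-1})$ to get the stated estimate. Your explicit check that the order parameter of Theorem~\ref{thm:nonzeroClose} is at most $q-1$, so that its base $t'$ satisfies $t'\ge t$ and $\log_{t'}\le\log_t$, is a detail the paper leaves implicit, and it is handled correctly.
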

\begin{proof}
  Let $g := \prod_{i=1}^r (1 - f_i^{q-1})$.
  Then $V = \{ \vb{x} \in Q \mid g (\vb{x}) \neq 0 \}$.
  Let $\vb{a} \in Q$.
  By Theorem~\ref{thm:nonzeroClose}, there is $\vb{b} \in V$
  with $d_H (\vb{a}, \vb{b}) \le \log_t (M(g))$.
  We have
  \[
  M(g) \le \prod_{i=1}^r (1 + M(f_i)^{q-1}),
  \]
  and therefore
  \[
      \begin{split} 
        \log_t (M(g)) & \le \sum_{i=1}^r \tfrac{\log_2 (1 + M(f_i)^{q-1})}{\log_2 (t)} \\
                   & = \tfrac{1}{\log_2 (t)} \sum_{i=1}^r \log_2 (1 + M(f_i)^{q-1}) \\
                   & \le \tfrac{1}{\log_2 (t)} \sum_{i=1}^r (1 + \log_2 (M(f_i)^{q-1})) \\
            & = \tfrac{1}{\log_2 (t)} (r + (q-1) \sum_{i=1}^r \log_2 (M(f_i))).
      \end{split}
      \]
     \end{proof}
Now we fix a finite field $\ob{F}$,
$r \in \N$ and a subset $S$ of $\ob{F} \setminus \{0\}$
and consider the problem to determine on input
$f_1, \ldots, f_r \in \ob{F} [X_1, \ldots, X_N]$
whether $f_1 = \cdots = f_N = 0$ has a solution in $S^N$.
We 
measure the size of the input polynomials in such a way
  that the size $n$ of $f_1, \ldots, f_r$ is at least $\max(N,M)$, where $N$ is the
  number of variables and $M$ is their total number of monomials.
  Then adapting the idea of~Corollary~\ref{cor:testset},
  we obtain an algorithm of time complexity in $O(n^{c \log(n)})$ (with
  $c > 0$) to
  solve this question.

  Theorem~\ref{thm:system} has a consequence that reminds of a Theorem by
  Chevalley \cite{Ch:DDHD} if we measure the complexity of a polynomial rather
  by
  the number of monomials it contains than by its degree.
 
  \begin{cor} \label{cor:CW}
  Let $\ob{F}$ be a finite field with $q>2$ elements,
  let $r, N \in \N$, let
  $f_1, \ldots, f_r \in \ob{F}[X_1, \ldots, X_N] \setminus \{0\}$,
  let $Q = \prod_{i = 1}^N A_i$ be a rectangular subset
  of $(\ob{F} \setminus \{0\})^N$ with $|A_i| > 1$ for all
  $i \in \ul{N}$, and let $t := \tfrac{q-1}{q-2}$.
  Let
  \[
    V := \{ \vb{x} \in Q \mid f_1 (\vb{x}) =
  \cdots = f_r (\vb{x}) = 0 \}.
  \]
  If 
      \( N > 
  \tfrac{1}{\log_2 (t)} \big(r + (q-1) \sum_{i=1}^r \log_2 (M(f_i))\big),
  \)
  then $V$ is not a singleton.
  \end{cor}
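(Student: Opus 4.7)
The plan is to prove the contrapositive by combining Theorem~\ref{thm:system} with a suitable choice of a ``farthest possible'' base point $\vb{a}$. The point is that Theorem~\ref{thm:system} guarantees a solution within Hamming distance $D := \tfrac{1}{\log_2 (t)} \bigl(r + (q-1) \sum_{i=1}^r \log_2 (M(f_i))\bigr)$ of every chosen $\vb{a}\in Q$, and our hypothesis $N > D$ means that a single solution cannot cover every $\vb{a}\in Q$ within that distance.

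First I would dispose of the trivial case: if $V = \emptyset$, then $V$ is already not a singleton and nothing remains to do. Otherwise, assume toward a contradiction that $V = \{\vb{b}\}$ for some $\vb{b} = (b_1,\ldots,b_N)\in Q$. The key move is to exploit the hypothesis $|A_i|>1$: for each $i\in\ul{N}$, since $A_i$ contains at least two elements, we may choose $a_i\in A_i$ with $a_i\neq b_i$. This produces a tuple $\vb{a}=(a_1,\ldots,a_N)\in Q$ with $d_H(\vb{a},\vb{b}) = N$.

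Now I apply Theorem~\ref{thm:system} with this $\vb{a}$: since $V\neq\emptyset$, there exists $\vb{b}'\in V$ with $d_H(\vb{a},\vb{b}') \le D$. But $V$ is assumed to consist of the single point $\vb{b}$, forcing $\vb{b}'=\vb{b}$, and hence $N = d_H(\vb{a},\vb{b}) \le D$, which contradicts the hypothesis $N > D$. Therefore $V$ cannot be a singleton.

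The argument is essentially a one-line deduction from Theorem~\ref{thm:system}; the only mild subtlety is noticing that the hypothesis $|A_i|>1$ for every $i$ is exactly what is needed to realize the maximum possible Hamming distance $N$ inside $Q$ from the hypothetical unique solution, and this is the step where the hypothesis is used. No real obstacle is expected.
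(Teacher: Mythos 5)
Your proposal is correct and follows essentially the same argument as the paper: choose $\vb{a}\in Q$ at Hamming distance $N$ from the (hypothetical unique) solution using $|A_i|>1$, then invoke Theorem~\ref{thm:system} to produce a solution within distance $D<N$ of $\vb{a}$. The paper phrases this directly (the nearby solution must differ from the given one) rather than by contradiction, but the content is identical.
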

   \begin{proof}
      We assume that $V$ is nonempty and  $\vb{c} \in V$.
      Let $\vb{a} \in Q$ be such that $d_H (\vb{a}, \vb{c}) = N$;
      such an $\vb{a}$ exists because each set $A_i$ contains
      at least two elements, which allows to pick
      $\vb{a} \in Q$ that differs from $\vb{c}$ in all components.
      Now Theorem~\ref{thm:system} yields $\vb{b} \in V$
      with $d_H (\vb{a}, \vb{b}) \le
      \tfrac{1}{\log_2 (t)} (r + (q-1) \sum_{i=1}^r \log_2 (M(f_i)))$,
      and thus by assumption, $d_H (\vb{a}, \vb{b}) < N$,
      which implies $\vb{b} \neq \vb{c}$.
      Hence $V$ is not a singleton.
  \end{proof}
 We note that the Theorem by Schauz and Brink
  \cite[Theorem~1]{Br:CTWR},
  \cite[Corollary~3.5]{Sc:ASPD} (cf. \cite[Theorem~13.1]{AM:CWTR})
  has the same conclusion -- the solution set in a rectangular domain
  is not a singleton -- under different hypotheses.
  There are situations in which Corollary~\ref{cor:CW} can be applied,
  but the assumptions of the Schauz-Brink-Theorem are not satisfied.
  Let us give one such example: by a \emph{binomial}, we understand
  a polynomial containing exactly $2$ monomials.
  On $\ob{F}_3$, let $Q := (\ob{F}_3 \setminus \{0\})^N$,
  and suppose that we are given a set of $r \ge 2$ binomials
  $f_1, \ldots, f_r$ in $N > 3r$ variables such that
  each of the $f_i$'s  has degree at least $\frac{N}{2}$. 
  The assumption under which the Schauz-Brink-Theorem
  claims that $f_1 = \cdots = f_r = 0$ does not have exactly one solution
  is $N > \sum_{i=1}^r \deg(f_j)$, which is not satisfied in the
  given situation, whereas
  the assumption of Corollary~\ref{cor:CW} is $N > 3r$, which
  we assumed to be satisfied.

  In the case that we know that $(0,\ldots, 0)$ is one solution of
  our system, Corollary~\ref{cor:close0} allows
  to obtain smaller spheres containing solutions.
  \begin{thm}
  Let $\ob{F}$ be a finite field with $q$ elements,
  let $r, N \in \N$, and let
  $f_1, \ldots, f_r \in \ob{F}[X_1, \ldots, X_N] \setminus \{0\}$.
  Let $Q = \bigtimes_{i=1}^N A_i$ be a rectangular subset of $\ob{F}^N$ with
  $(0, \ldots, 0)  \in Q$,
  and let 
  \[
    V := \{ \vb{x} \in Q \mid f_1 (\vb{x}) =
  \cdots = f_r (\vb{x}) = 0 \}.
  \]
  If $(0,\ldots, 0) \in V$, then 
  for every $\vb{a} \in \bigtimes_{i=1}^N (A_i \setminus \{0\})$, there is $\vb{b} \in V \cap
   (\bigtimes_{i=1}^N \{0 , a_i\})$
  with \[d_H (\vb{a}, \vb{b}) \le 
           r + (q-1) \sum_{i=1}^r \log_2 (M(f_i)).
  \]
\end{thm}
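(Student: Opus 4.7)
The plan is to follow the same strategy as in Theorem~\ref{thm:system}, but to substitute Corollary~\ref{cor:close0} for Theorem~\ref{thm:nonzeroClose}. This substitution is available precisely because the assumption $(0,\ldots,0) \in V$ lets us take the origin as the ``anchor'' at which the auxiliary polynomial is nonzero, which is exactly what Corollary~\ref{cor:close0} requires.

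First, I would introduce the auxiliary polynomial $g := \prod_{i=1}^r (1 - f_i^{q-1}) \in \ob{F}[X_1,\ldots,X_N]$. Since for $y \in \ob{F}$ we have $y^{q-1} = 1$ if $y \neq 0$ and $y^{q-1} = 0$ if $y = 0$, the set $V$ coincides with $\{\vb{x} \in Q \mid g(\vb{x}) \neq 0\}$. The hypothesis $(0,\ldots,0) \in V$ gives $f_i(0,\ldots,0) = 0$ for every $i$, so $g(0,\ldots,0) = \prod_{i=1}^r (1 - 0) = 1 \neq 0$.

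Next, because $0 \in A_i$ and $a_i \in A_i$ for every $i \in \ul{N}$, the box $Q_0 := \bigtimes_{i=1}^N \{0, a_i\}$ is contained in $Q$. I would apply Corollary~\ref{cor:close0} to $g$ and to the point $\vb{a} \in (\ob{F} \setminus \{0\})^N$, obtaining $\vb{b} \in Q_0$ with $g(\vb{b}) \neq 0$ and $d_H(\vb{a}, \vb{b}) \le \log_2(M(g))$. Since $g(\vb{b}) \neq 0$, each factor $1 - f_i(\vb{b})^{q-1}$ is nonzero, which forces $f_i(\vb{b}) = 0$, and therefore $\vb{b} \in V \cap Q_0$.

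The only remaining task is to bound $M(g)$, which is done exactly as in the proof of Theorem~\ref{thm:system}. Expanding the product yields $M(g) \le \prod_{i=1}^r (1 + M(f_i)^{q-1})$, and since $M(f_i) \ge 1$ we have $1 + M(f_i)^{q-1} \le 2\,M(f_i)^{q-1}$, so
\[
  \log_2 M(g) \le \sum_{i=1}^r \bigl(1 + (q-1)\log_2 M(f_i)\bigr) = r + (q-1)\sum_{i=1}^r \log_2 M(f_i),
\]
which is precisely the asserted bound on $d_H(\vb{a},\vb{b})$. I do not anticipate any real obstacle: all the substance already sits in the two earlier results, and the only things to verify are that $Q_0 \subseteq Q$ and that the origin qualifies as the base point required by Corollary~\ref{cor:close0}.
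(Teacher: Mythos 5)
Your proof is correct and follows essentially the same route as the paper: reduce to the single polynomial $g = \prod_{i=1}^r (1 - f_i^{q-1})$, apply Corollary~\ref{cor:close0} at the origin (which is a nonzero of $g$ by the hypothesis $(0,\ldots,0)\in V$), and bound $\log_2 M(g)$ via $M(g) \le \prod_{i=1}^r (1 + M(f_i)^{q-1})$. The extra checks you include (that $\bigtimes_{i=1}^N\{0,a_i\}\subseteq Q$ and that the origin qualifies as the base point) are correct and only make the argument more explicit than the paper's version.
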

  \begin{proof}
    We use Corollary~\ref{cor:close0} for the
    polynomial $g = \prod_{i=1}^r (1-f_i^{q-1})$
    and thereby obtain
     $\vb{b} \in V \cap (\bigtimes_{i=1}^N \{0 , a_i\})$
    with $d_H (\vb{a}, \vb{b}) \le \log_2 (M (g))$.
       Now $\log_2 (M (g)) \le
            \sum_{i=1}^r \log_2 (1 + M(f_i)^{q-1})
           \le \sum_{i=1}^r (1 + \log_2 (M(f_i)^{q-1}))
       = r + (q-1) \sum_{i=1}^r \log_2 (M(f_i))$.
  \end{proof}
  \section*{Acknowledgements}
  The authors thank Arne Winterhof for drawing their attention
  to the reference~\cite{KW:OTIO}.
  \bibliography{coeffs36}

\def\cprime{$'$}
\providecommand{\bysame}{\leavevmode\hbox to3em{\hrulefill}\thinspace}
\providecommand{\MR}{\relax\ifhmode\unskip\space\fi MR }
% \MRhref is called by the amsart/book/proc definition of \MR.
\providecommand{\MRhref}[2]{%
  \href{http://www.ams.org/mathscinet-getitem?mr=#1}{#2}
}
\providecommand{\href}[2]{#2}
\begin{thebibliography}{10}

\bibitem{AL:AITG}
W.~W. Adams and P.~Loustaunau, \emph{An introduction to {G}r\"{o}bner bases},
  Graduate Studies in Mathematics, vol.~3, American Mathematical Society,
  Providence, RI, 1994.

\bibitem{AM:CWTR}
E.~Aichinger and J.~Moosbauer, \emph{Chevalley-{W}arning type results on
  abelian groups}, J. Algebra \textbf{569} (2021), 30--66.

\bibitem{Al:CN}
N.~Alon, \emph{Combinatorial {N}ullstellensatz}, Combin. Probab. Comput.
  \textbf{8} (1999), no.~1-2, 7--29, Recent trends in combinatorics
  (M\'{a}trah\'{a}za, 1995).

\bibitem{BM:ESAP}
D.~M. Barrington, P.~McKenzie, C.~Moore, P.~Tesson, and D.~Th\'{e}rien,
  \emph{Equation satisfiability and program satisfiability for finite monoids},
  Mathematical foundations of computer science 2000 ({B}ratislava), Lecture
  Notes in Comput. Sci., vol. 1893, Springer, Berlin, 2000, pp.~172--181.

\bibitem{Br:CTWR}
D.~Brink, \emph{Chevalley's theorem with restricted variables}, Combinatorica
  \textbf{31} (2011), no.~1, 127--130.

\bibitem{Ch:DDHD}
C.~{Chevalley}, \emph{{D\'emonstration d'une hypoth\`ese de M. Artin}}, {Abh.
  Math. Semin. Univ. Hamb.} \textbf{11} (1935), 73--75 (French).

\bibitem{CD:OZTA}
M.~Clausen, A.~Dress, J.~Grabmeier, and M.~Karpinski, \emph{On zero-testing and
  interpolation of {$k$}-sparse multivariate polynomials over finite fields},
  Theoret. Comput. Sci. \textbf{84} (1991), no.~2, Algorithms Automat.
  Complexity Games, 151--164.

\bibitem{CLO:IVAA4}
D.~A. Cox, J.~Little, and D.~O'Shea, \emph{Ideals, varieties, and algorithms},
  fourth ed., Undergraduate Texts in Mathematics, Springer, Cham, 2015.

\bibitem{GK:FPAF}
D.~Y. Grigoriev, M.~Karpinski, and M.~F. Singer, \emph{Fast parallel algorithms
  for sparse multivariate polynomial interpolation over finite fields}, SIAM J.
  Comput. \textbf{19} (1990), no.~6, 1059--1063.

\bibitem{KW:OTIO}
E.~Kiltz and A.~Winterhof, \emph{On the interpolation of bivariate polynomials
  related to the {D}iffie-{H}ellman mapping}, Bull. Austral. Math. Soc.
  \textbf{69} (2004), no.~2, 305--315.

\bibitem{Ko:CATE}
M.~Kompatscher, \emph{{CC}-circuits and the expressive power of nilpotent
  algebras}, Log. Methods Comput. Sci. \textbf{18} (2022), no.~2, Paper No. 12,
  15.

\bibitem{Sc:ASPD}
U.~Schauz, \emph{Algebraically solvable problems: describing polynomials as
  equivalent to explicit solutions}, Electron. J. Combin. \textbf{15} (2008),
  no.~1, Research Paper 10, 35.

\bibitem{vL:ITCT}
J.~H. van Lint, \emph{Introduction to coding theory}, third ed., Graduate Texts
  in Mathematics, vol.~86, Springer-Verlag, Berlin, 1999.

\end{thebibliography}
    \end{document}